\newtheorem{thm}{Theorem}[section]
\newtheorem{cor}[thm]{Corollary}
\newtheorem{lem}[thm]{Lemma}
\journal{~}
\begin{document}
\begin{spacing}{1.15}
\begin{frontmatter}
\title{\textbf{Inverse Perron values and connectivity of a uniform hypergraph}}

\author[label1,label2]{Changjiang Bu}
\ead{buchangjiang@hrbeu.edu.cn}
\author[label1]{Haifeng Li}
\author[label2]{Jiang Zhou}


\address{
\address[label1]{College of Automation, Harbin Engineering University, Harbin 150001, PR China}
\address[label2]{College of Science, Harbin Engineering University, Harbin 150001, PR China}

}

\begin{abstract}
In this paper, we show that a uniform hypergraph $\mathcal{G}$ is connected if and only if one of its inverse Perron values is larger than $0$. We give some bounds on the bipartition width, isoperimetric number and eccentricities of $\mathcal{G}$ in terms of inverse Perron values. By using the inverse Perron values, we give an estimation of the edge connectivity of a $2$-design, and determine the explicit edge connectivity of a symmetric design. Moreover, relations between the inverse Perron values and resistance distance of a connected graph are presented.
\end{abstract}

\begin{keyword}
Hypergraph, Inverse Perron value, Laplacian tensor, Connectivity\\
\emph{AMS classification:} 05C50, 05C65, 05C40, 05C12, 15A69
\end{keyword}
\end{frontmatter}

\section{Introduction}
Let $V(\mathcal{G})$ and $E(\mathcal{G})$ denote the vertex set and edge set of a hypergraph $\mathcal{G}$, respectively. $\mathcal{G}$ is $k$-uniform if $|e|=k$ for each $e\in E(\mathcal{G})$. In particular, $2$-uniform hypergraphs are usual graphs. For $i\in V(\mathcal{G})$, $E_i(\mathcal{G})$ denotes the set of edges containing $i$, and $d_i=|E_i(\mathcal{G})|$ denotes the degree of $i$. The adjacency tensor \cite{9} of a $k$-uniform hypergraph $\mathcal{G}$, denoted by $\mathcal{A}_\mathcal{G}$, is an order $k$ dimension $|V(\mathcal{G})|$ tensor with entries
\[a_{i_1 i_2  \cdots i_k }  = \left\{ \begin{gathered}
  \frac{1}
{{\left( {k - 1} \right)!}},{\kern 1pt} {\kern 1pt} {\kern 1pt} {\kern 1pt} {\kern 1pt} {\kern 1pt} {\kern 1pt} {\kern 1pt} {\kern 1pt} {\kern 1pt} {\kern 1pt} {\kern 1pt} {\kern 1pt} {\kern 1pt} {\kern 1pt} {\kern 1pt} {\kern 1pt} {\kern 1pt} {\kern 1pt} {\kern 1pt} {\kern 1pt} {\kern 1pt} {\kern 1pt} {\kern 1pt} {\kern 1pt} {\kern 1pt} {\kern 1pt} {\kern 1pt} {\kern 1pt} if~\left\{ {\left. {i_1 ,i_2 , \ldots ,i_k } \right\}} \right. \in E\left( \mathcal{G} \right), \hfill \\
  0,{\kern 1pt} {\kern 1pt} {\kern 1pt} {\kern 1pt} {\kern 1pt} {\kern 1pt} {\kern 1pt} {\kern 1pt} {\kern 1pt} {\kern 1pt} {\kern 1pt} {\kern 1pt} {\kern 1pt} {\kern 1pt} {\kern 1pt} {\kern 1pt} {\kern 1pt} {\kern 1pt} {\kern 1pt} {\kern 1pt} {\kern 1pt} {\kern 1pt} {\kern 1pt} {\kern 1pt} {\kern 1pt} {\kern 1pt} {\kern 1pt} {\kern 1pt} {\kern 1pt} {\kern 1pt} {\kern 1pt} {\kern 1pt} {\kern 1pt} {\kern 1pt} {\kern 1pt} {\kern 1pt} {\kern 1pt} {\kern 1pt} {\kern 1pt} {\kern 1pt} {\kern 1pt} {\kern 1pt} {\kern 1pt} {\kern 1pt} {\kern 1pt} {\kern 1pt} {\kern 1pt} {\kern 1pt} {\kern 1pt} {\kern 1pt} {\kern 1pt} {\kern 1pt} {\kern 1pt} {\kern 1pt} {\kern 1pt} {\kern 1pt} {\kern 1pt} {\kern 1pt} {\kern 1pt} {\kern 1pt} {\kern 1pt} {\kern 1pt} {\kern 1pt} {\kern 1pt} {\kern 1pt} {\kern 1pt} {\kern 1pt} {{otherwise}}. \hfill \\
\end{gathered}  \right.\]
The \textit{Laplacian tensor} \cite{H+} of $\mathcal{G}$ is $\mathcal{L}_\mathcal{G}=\mathcal{D}_\mathcal{G}-\mathcal{A}_\mathcal{G}$, where $\mathcal{D}_\mathcal{G}$ is the diagonal tensor of vertex degrees of $\mathcal{G}$. Recently, the research on spectral hypergraph theory via tensors has attracted much attention [7-10,14,19,24]. The spectral properties of the Laplacian tensor of hypergraphs are studied in [13,25,27,29,35].

The algebraic connectivity of a graph plays important roles in spectral graph theory \cite{Fiedler}. Analogue to the algebraic connectivity of a graph, Qi \cite{H+} defined the \textit{analytic connectivity} of a $k$-uniform hypergraph $\mathcal{G}$ as
\[
\alpha(\mathcal{G})= \min\limits_{j=1,\ldots,n} \min \left\{\mathcal{L}_\mathcal{G}\mathbf{x}^k:\mathbf{x}\in \mathbb{R}^n_+, \sum_{i=1}^n x_i^k=1, x_j=0 \right\},
\]
where $n=|V(G)|$, $\mathbb{R}^n_+$ denotes the set of nonnegative vectors of dimension $n$. Qi proved that $\mathcal{G}$ is connected if and only if $\alpha(\mathcal{G})>0$. In \cite{liwei}, some bounds on $\alpha(\mathcal{G})$ were presented in terms of degree, vertex connectivity, diameter and isoperimetric number. A feasible trust region algorithm of $\alpha(\mathcal{G})$ was give in \cite{cuichunfeng}.

For any vertex $j$ of uniform hypergraph $\mathcal{G}$, we define the \textit{inverse Perron value} of $j$ as
\[
\alpha_j(\mathcal{G})= \min \left\{\mathcal{L}_\mathcal{G}\mathbf{x}^k:\mathbf{x}\in \mathbb{R}^n_+, \sum_{i=1}^n x_i^k=1, x_j=0 \right\}.
\]
Clearly, the analytic connectivity $\alpha(\mathcal{G})=\min\limits_{j\in V(\mathcal{G})}\alpha_j(\mathcal{G})$ is the minimum inverse Perron value. For a connected graph $G$, $\alpha_j(G)$ is the minimum eigenvalue of $\mathcal{L}_G(j)$, where $\mathcal{L}_G(j)$ is the principal submatrix of $\mathcal{L}_G$ obtained by deleting the row and column corresponding to $j$. $\mathcal{L}_G(j)$ is a nonsingular $M$-matrix, and its inverse $\mathcal{L}_G(j)^{-1}$ is a nonnegative matrix \cite{Kirkland}. It is easy to see that $\alpha_j^{-1}(G)$ is the spectral radius of $\mathcal{L}_G(j)^{-1}$, which is called the Perron value of $G$. The Perron values have close relations with the Fielder vector of a tree \cite{a,b}.

The resistance distance \cite{Klein93,zhou17} is a distance function on graphs. For two vertices $i,j$ in a connected graph $G$, the \textit{resistance distance} between $i$ and $j$, denoted by $r_{ij}(G)$, is defined to be the effective resistance between them when unit resistors are placed on every edge of $G$. The \textit{Kirchhoff index} \cite{Klein93,ZhouB} of $G$, denoted by $Kf(G)$, is defined as the sum of resistance distances between all pairs of vertices in $G$, i.e., $Kf(G)=\sum_{\{i,j\}\subseteq V(G)}r_{ij}(G)$. $Kf(G)$ is a global robustness index. The resistance distance and Kirchhoff index in graphs have been investigated extensively in mathematical and chemical literatures [3-6,12,26,35,40].

This paper is organized as follows. In Section 2, some auxiliary lemmas are introduced. In Section 3, we show that a uniform hypergraph $\mathcal{G}$ is connected if and only if one of its inverse Perron values is larger than $0$, and some inequalities among the inverse Perron values, bipartition width, isoperimetric number and eccentricities of $\mathcal{G}$ are established. Partial results improve some bounds in \cite{liwei,H+}. We also use the inverse Perron values to estimate the edge connectivity of $2$-designs. In Section 4, some inequalities among the inverse Perron values, resistance distance and Kirchhoff index of a connected graph are presented.
\section{Preliminaries}
For a positive integer $n$, let $[n]=\{1,2,\ldots,n\}$. An order $m$ dimension $n$ tensor $\mathcal{ T} = (t_{i_1  \cdots i_m } )$ consists of $n^m$ entries, where $i_j  \in [n],\,j\in [m]$. When $m=2$, ${T}$ is an $n\times n$ matrix. Let $\mathbb{R}^{[m,n]}$ denote the set of order $m$ dimension $n$ real tensors, and let $\mathbb{R}_+^{n}$ denote the $n$-dimensional nonnegative vector space. For $\mathcal{T}=(t_{i_1i_2\cdots i_m})\in \mathbb{R}^{[m,n]}$ and $\mathbf{x}=\left({x_1,\ldots,x_n }\right)^\mathrm{T}\in\mathbb{R}^n$, let ${\mathcal{T}\mathbf{x}^{m - 1} }\in \mathbb{R}^n$ denote the vector whose $i$-th component is
\[
\left( {\mathcal{T}\mathbf{x}^{m - 1} } \right)_i  = \sum\limits_{i_2 ,i_3 , \ldots ,i_m  = 1}^n {t_{ii_2  \cdots i_m } x_{i_2 } x_{i_3 }  \cdots x_{i_m } },
\]
and let $\mathcal{T}\mathbf{x}^m$ denote the following polynomial
\[
\mathcal{T}\mathbf{x}^m=\sum\limits_{i_1,\ldots,i_m=1}^nt_{i_1i_2\cdots i_m} x_{i_1}\cdots x_{i_m}.
\]
In 2005, Qi \cite{RA2005} and Lim \cite{Chang} proposed the concept of eigenvalues of tensors, independently. For $\mathcal{T} = \left( {t_{i_1 i_2  \cdots i_m } } \right) \in \mathbb{R}^{\left[ {m,n} \right]}$, if there exists a number $\lambda\in\mathbb{R}$ and a nonzero vector $\mathbf{x}=\left({x_1,\ldots,x_n }\right)^\mathrm{T}\in\mathbb{R}^n$ such that $\mathcal{T}\mathbf{x}^{m - 1}  = \lambda \mathbf{x}^{\left[ {m - 1} \right]}$, then $\lambda$ is called an \textit{H-eigenvalue} of $\mathcal{T}$, $\mathbf{x}$ is called an \textit{H-eigenvector} of $\mathcal{T}$ corresponding to $\lambda$, where $\mathbf{x}^{\left[ {m - 1} \right]}=(x_1^{m-1},\ldots,x_n^{m-1})^\mathrm{T}$.

For a vertex $j$ of a $k$-uniform hypergraph $\mathcal{G}$, let $\mathcal{L}_\mathcal{G}(j)\in \mathbb{R}^{[k,n-1]}$ denote the principal subtensor of $\mathcal{L_G}\in \mathbb{R}^{[k,n]}$ with index set $V(\mathcal{G})\setminus\{j\}$. By Lemma 2.3 in \cite{M-tensor}, we have the following lemma.
\begin{lem}\label{2.1}
Let $\mathcal{G}$ be a $k$-uniform hypergraph. For any $j\in V(\mathcal{G})$, $\alpha_j(\mathcal{G})$ is the smallest H-eigenvalue of $\mathcal{L}_\mathcal{G}(j)$.
\end{lem}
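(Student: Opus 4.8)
The plan is to rewrite the optimization problem defining $\alpha_j(\mathcal{G})$ so that it lives entirely on the subtensor $\mathcal{L}_\mathcal{G}(j)$, and then to quote Lemma 2.3 of \cite{M-tensor}. First I would note that in the polynomial $\mathcal{L}_\mathcal{G}\mathbf{x}^k=\sum_{i_1,\ldots,i_k}(\mathcal{L}_\mathcal{G})_{i_1\cdots i_k}x_{i_1}\cdots x_{i_k}$ every monomial in which the index $j$ occurs carries a factor $x_j$, so imposing $x_j=0$ annihilates exactly those monomials. Writing $\mathbf{y}\in\mathbb{R}^{n-1}$ for the vector obtained from $\mathbf{x}$ by deleting the $j$-th coordinate, this gives $\mathcal{L}_\mathcal{G}\mathbf{x}^k=\mathcal{L}_\mathcal{G}(j)\mathbf{y}^k$ and $\sum_{i=1}^n x_i^k=\sum_i y_i^k$ for every such $\mathbf{x}$, hence
\[
\alpha_j(\mathcal{G})=\min\Big\{\mathcal{L}_\mathcal{G}(j)\mathbf{y}^k:\mathbf{y}\in\mathbb{R}^{n-1}_+,\ \sum_i y_i^k=1\Big\};
\]
the feasible set here is compact and the objective is a polynomial, so the minimum is attained.

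Next I would record the structural facts that allow the cited lemma to be applied. The tensor $\mathcal{L}_\mathcal{G}(j)$ is symmetric, being a principal subtensor of the symmetric tensor $\mathcal{L}_\mathcal{G}=\mathcal{D}_\mathcal{G}-\mathcal{A}_\mathcal{G}$, and it is a $Z$-tensor, since each off-diagonal entry equals $-a_{i_1\cdots i_k}\le 0$ while each diagonal entry is a vertex degree $d_i\ge 0$; equivalently $\mathcal{L}_\mathcal{G}(j)=\eta\mathcal{I}-\mathcal{B}$ with $\mathcal{B}\ge 0$ entrywise for $\eta$ sufficiently large. These are exactly the hypotheses under which Lemma 2.3 of \cite{M-tensor} identifies the minimum of the associated form over the nonnegative unit $\ell^k$-sphere with the smallest H-eigenvalue of the tensor. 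Applying it to $\mathcal{L}_\mathcal{G}(j)$ and combining with the displayed reduction yields the lemma.

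I expect the only genuinely delicate point to be this last invocation. For an arbitrary symmetric tensor the constrained minimum over the nonnegative orthant need not be an H-eigenvalue, because a minimizer on the boundary of the orthant satisfies only KKT inequalities, rather than the eigen-equations, on its vanishing coordinates. The $Z$-tensor structure is what removes this obstacle: with $\mathcal{L}_\mathcal{G}(j)=\eta\mathcal{I}-\mathcal{B}$, minimizing $\mathcal{L}_\mathcal{G}(j)\mathbf{y}^k$ over the nonnegative sphere is equivalent to maximizing $\mathcal{B}\mathbf{y}^k$ there, and the Perron--Frobenius theory of nonnegative tensors identifies this maximum as $\rho(\mathcal{B})$, attained at a nonnegative H-eigenvector of $\mathcal{B}$; transporting back, the minimum equals $\eta-\rho(\mathcal{B})$, which is the smallest H-eigenvalue of $\mathcal{L}_\mathcal{G}(j)$. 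Since this is precisely the content of Lemma 2.3 of \cite{M-tensor}, in the write-up I would just verify the $Z$-tensor property of $\mathcal{L}_\mathcal{G}(j)$ and cite it.
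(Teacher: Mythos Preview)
Your proposal is correct and follows essentially the same approach as the paper: the paper gives no proof at all, simply stating ``By Lemma 2.3 in \cite{M-tensor}, we have the following lemma,'' while you spell out the reduction to $\mathcal{L}_\mathcal{G}(j)$ and verify the $Z$-tensor hypothesis before invoking that same cited result. Your write-up is thus a fleshed-out version of the paper's one-line citation.
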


A path $\mathcal{P}$ of a uniform hypergraph $\mathcal{G}$ is  an alternating sequence of vertices and edges $v_{0}e_{1}v_{1}e_{2} \cdots v_{l-1}e_{l}v_{l}$, where $v_0,\ldots,v_{l}$ are distinct vertices of $\mathcal{G}$, $e_1,\ldots,e_l$ are distinct edges of $\mathcal{G}$ and $v_{i - 1} ,v_i  \in e_i $, for $i = 1, \ldots ,l$. The number of edges in $P$ is the length of $P$. For all $u,v \in V(\mathcal{G})$, if there exists a path starting at $u$ and terminating at $v$, then $\mathcal{G}$ is said to be \textit{connected} \cite{A. Bretto}.

\begin{lem}\rm\cite{H+}\label{connected}
The uniform hypergraph $\mathcal{G}$ is connected if and only if $\alpha(\mathcal{G})>0$.
\end{lem}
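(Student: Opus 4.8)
The plan is to argue directly from the variational definition of $\alpha(\mathcal{G})$, using the edgewise expansion of $\mathcal{L}_\mathcal{G}\mathbf{x}^k$. First I would record the identity
\[
\mathcal{L}_\mathcal{G}\mathbf{x}^k=\sum_{e=\{i_1,\ldots,i_k\}\in E(\mathcal{G})}\Bigl(x_{i_1}^k+x_{i_2}^k+\cdots+x_{i_k}^k-k\,x_{i_1}x_{i_2}\cdots x_{i_k}\Bigr),
\]
which follows by summing the diagonal degree terms of $\mathcal{D}_\mathcal{G}\mathbf{x}^k$ over the edges through each vertex and by counting, for each edge, the $k!$ permuted adjacency entries of $\mathcal{A}_\mathcal{G}$, each equal to $1/(k-1)!$. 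By the AM--GM inequality every summand is nonnegative on $\mathbb{R}^n_+$, so $\mathcal{L}_\mathcal{G}\mathbf{x}^k\ge 0$ for all $\mathbf{x}\in\mathbb{R}^n_+$; hence $\alpha_j(\mathcal{G})\ge 0$ for every $j$ and $\alpha(\mathcal{G})\ge 0$ unconditionally. What must be proved is therefore the dichotomy between $\alpha(\mathcal{G})=0$ and $\alpha(\mathcal{G})>0$.

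For ``$\mathcal{G}$ disconnected $\Rightarrow\alpha(\mathcal{G})=0$'', I would pick a connected component $C$ with $\emptyset\ne C\subsetneq V(\mathcal{G})$, choose a vertex $j\notin C$, and set $x_i=|C|^{-1/k}$ for $i\in C$ and $x_i=0$ otherwise. This $\mathbf{x}$ is feasible in the problem defining $\alpha_j(\mathcal{G})$, and since every edge of $\mathcal{G}$ lies inside a single component, each edge contributes either a vanishing summand (its coordinates are all equal, when $e\subseteq C$) or $0$ (when $e\cap C=\emptyset$). Thus $\mathcal{L}_\mathcal{G}\mathbf{x}^k=0$, so $\alpha_j(\mathcal{G})=0$ and therefore $\alpha(\mathcal{G})=0$.

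For ``$\mathcal{G}$ connected $\Rightarrow\alpha(\mathcal{G})>0$'', I would fix $j$ and note that the feasible set $\{\mathbf{x}\in\mathbb{R}^n_+:\sum_i x_i^k=1,\ x_j=0\}$ is compact (and nonempty for $n\ge 2$), while $\mathbf{x}\mapsto\mathcal{L}_\mathcal{G}\mathbf{x}^k$ is continuous, so $\alpha_j(\mathcal{G})$ is attained at some $\mathbf{x}^{*}$. If $\alpha_j(\mathcal{G})=0$, then every edge summand in the expansion of $\mathcal{L}_\mathcal{G}\mathbf{x}^k$ evaluated at $\mathbf{x}^{*}$ vanishes, so by the equality case of AM--GM the coordinates of $\mathbf{x}^{*}$ are constant across each edge $e$. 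Propagating this equality along a path $j=v_0e_1v_1\cdots e_lv_l=v$ --- which exists because $\mathcal{G}$ is connected --- gives $x^{*}_v=x^{*}_j=0$ for every vertex $v$, so $\mathbf{x}^{*}=\mathbf{0}$, contradicting $\sum_i(x^{*}_i)^k=1$. (The degenerate case $n=1$ is trivial, the constraint $x_j=0$ being incompatible with $\sum_i x_i^k=1$.) Hence $\alpha_j(\mathcal{G})>0$ for every $j$, so $\alpha(\mathcal{G})>0$.

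The main obstacle lies in this last direction, in two places: first, justifying that the infimum is genuinely attained, so that ``$\alpha_j(\mathcal{G})=0$'' yields an actual minimizer on which all edge summands vanish rather than merely a minimizing sequence; and second, carefully propagating the AM--GM equality condition through an arbitrary path to force the minimizer to be globally constant (hence zero, via $x_j=0$). One could instead invoke Lemma \ref{2.1} to reinterpret $\alpha_j(\mathcal{G})$ as the smallest H-eigenvalue of $\mathcal{L}_\mathcal{G}(j)$ and run the argument on an H-eigenvector, but the self-contained variational version above seems cleaner and avoids appealing to nonnegative-tensor spectral theory.
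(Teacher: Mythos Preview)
The paper does not prove this lemma; it is quoted verbatim from \cite{H+} as a known result, so there is no ``paper's own proof'' to compare against here. Your argument is correct and self-contained. Your disconnected $\Rightarrow\alpha(\mathcal{G})=0$ direction is exactly the construction the paper later deploys in the proof of Theorem~\ref{connected.max} (the implication (3)$\Rightarrow$(1)), where the indicator vector of a component not containing $j$ witnesses $\alpha_j(\mathcal{G})=0$. Your connected $\Rightarrow\alpha(\mathcal{G})>0$ direction --- compactness of the feasible set to secure a minimizer, then the AM--GM equality case forcing $\mathbf{x}^*$ to be constant on each edge, then propagation along paths from $j$ --- is a clean variational argument that the present paper does not supply at all, relying instead on the citation. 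So your proposal is sound and in fact fills in what the paper leaves to the reference; the only overlap with anything proved in this paper is the easy direction, where the two arguments coincide.
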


Let $\mathcal{G}$ be a $k$-uniform hypergraph, $S$ be a proper nonempty subset of $V(\mathcal{G})$. Denote $\overline{S}=V(\mathcal{G})\setminus S$. The edge-cut set $E(S,\overline{S})$ consists of edges whose vertices are in both $S$ and $\overline{S}$. The minimum cardinality of such an edge-cut set is called \textit{edge connectivity} of $\mathcal{G}$, denote by $e(\mathcal{G})$.

\begin{lem}\rm\cite{H+}\label{edge}
Let $\mathcal{G}$ be a $k$-uniform hypergraph with $n$ vertices. Then
\[
e({\mathcal{G}})\geq\frac{n}{k}\alpha(\mathcal{G}).\]
\end{lem}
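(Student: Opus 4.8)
The plan is to pick a minimum edge-cut $(S,\overline S)$ of $\mathcal{G}$, feed two explicit test vectors into the optimization problem defining $\alpha(\mathcal{G})$ — one supported uniformly on $S$, one supported uniformly on $\overline S$ — and then combine the two resulting upper bounds with a one-line averaging argument.

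First I would fix a proper nonempty $S\subseteq V(\mathcal{G})$ with $|E(S,\overline S)|=e(\mathcal{G})$ and set $s=|S|$. The computational backbone is the identity $\mathcal{L}_\mathcal{G}\mathbf{x}^k=\sum_{e\in E(\mathcal{G})}\bigl(\sum_{i\in e}x_i^k-k\prod_{i\in e}x_i\bigr)$, which follows from $\mathcal{L}_\mathcal{G}=\mathcal{D}_\mathcal{G}-\mathcal{A}_\mathcal{G}$ together with the fact that $d_i$ is the number of edges through $i$. Evaluating it on the vector $\mathbf{x}$ with $x_i=s^{-1/k}$ for $i\in S$ and $x_i=0$ otherwise, every edge inside $S$ contributes $0$ (AM--GM equality), every edge inside $\overline S$ contributes $0$ trivially, and a crossing edge $e$ contributes exactly $|e\cap S|\,s^{-1}$ because its product term vanishes. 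Hence $\mathcal{L}_\mathcal{G}\mathbf{x}^k=a/s$, where $a=\sum_{e\in E(S,\overline S)}|e\cap S|$. Since $\mathbf{x}$ is nonnegative, has unit $k$-norm, and vanishes on all of $\overline S$, it is admissible in the definition of $\alpha_j(\mathcal{G})$ for any $j\in\overline S$, so $\alpha(\mathcal{G})=\min_j\alpha_j(\mathcal{G})\le a/s$. The mirror-image vector supported on $\overline S$ gives, in the same way, $\alpha(\mathcal{G})\le b/(n-s)$ with $b=\sum_{e\in E(S,\overline S)}|e\cap\overline S|$.

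To finish, note that $a+b=\sum_{e\in E(S,\overline S)}|e|=k\,e(\mathcal{G})$, and apply the elementary fact that $\min\{a/s,\,b/(n-s)\}\le(a+b)/n$: were both ratios strictly larger than $(a+b)/n$, we would get $an>(a+b)s$ and $bn>(a+b)(n-s)$, and adding these two inequalities yields $(a+b)n>(a+b)n$, a contradiction. Therefore $\alpha(\mathcal{G})\le(a+b)/n=k\,e(\mathcal{G})/n$, i.e.\ $e(\mathcal{G})\ge\frac{n}{k}\alpha(\mathcal{G})$. The step that needs care — and the only real obstacle — is this last combination: bounding $|e\cap S|\le k-1$ crudely edge by edge would only deliver the weaker estimate $e(\mathcal{G})\ge\frac{n}{2(k-1)}\alpha(\mathcal{G})$, so one must keep the genuine totals $a$ and $b$ and let the split between the two test vectors be dictated by which side of the cut carries the larger share of crossing incidences.
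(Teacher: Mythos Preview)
The paper does not supply its own proof of this lemma; it is quoted from Qi~\cite{H+} and used only as a black box (in the proof of Theorem~3.9). There is therefore no ``paper proof'' to compare against. Your argument is correct and complete: the test vectors supported uniformly on $S$ and on $\overline S$ are exactly the ones the paper itself deploys in the proofs of Theorems~\ref{bwthm} and~\ref{igthm3.4}, the computation $\mathcal{L}_\mathcal{G}\mathbf{x}^k=a/s$ matches (3.1), and the mediant step $\min\{a/s,b/(n-s)\}\le(a+b)/n$ together with $a+b=k\,e(\mathcal{G})$ closes the argument cleanly. Your final remark about why the crude bound $|e\cap S|\le k-1$ is insufficient is a nice diagnostic but not needed for the proof itself.
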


The \textit{$\{1\}$-inverse} of a matrix $M$ is a matrix $X$ such that $MXM=M$. Let $M^{(1)}$ denote any $\{1\}$-inverse of $M$, and let $(M)_{ij}$ denote the $(i,j)$-entry of $M$.
\begin{lem}\rm\cite{Bapat04,zhou17}\label{2.4}
Let $G$ be a connected graph. Then
\begin{eqnarray*}
r_{ij}(G)=(\mathcal{L}_G^{(1)})_{ii}+(\mathcal{L}_G^{(1)})_{jj}-(\mathcal{L}_G^{(1)})_{ij}-(\mathcal{L}_G^{(1)})_{ji}.
\end{eqnarray*}
\end{lem}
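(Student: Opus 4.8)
The plan is to reduce the identity to the classical expression of resistance distance via the Moore--Penrose pseudoinverse and then exploit the defining relation of a $\{1\}$-inverse. Write $L=\mathcal{L}_G$. Since $G$ is connected, $L$ is symmetric positive semidefinite with $\mathrm{rank}(L)=n-1$ and null space spanned by the all-ones vector $\mathbf{1}$, so that $\mathrm{range}(L)=\mathbf{1}^{\perp}=\{x\in\mathbb{R}^n:\mathbf{1}^{\mathrm T}x=0\}$. The electrical interpretation of $r_{ij}(G)$ gives the standard formula $r_{ij}(G)=(e_i-e_j)^{\mathrm T}L^{+}(e_i-e_j)$, where $L^{+}$ is the Moore--Penrose pseudoinverse and $e_i,e_j$ are standard basis vectors; equivalently, if $v$ is any solution of $Lv=e_i-e_j$ (one exists because $\mathbf{1}^{\mathrm T}(e_i-e_j)=0$), then $r_{ij}(G)=v_i-v_j$, a quantity independent of the choice of $v$ since two solutions differ by a multiple of $\mathbf{1}$.

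Next I would prove that for any $\{1\}$-inverse $L^{(1)}$ and any $x\in\mathrm{range}(L)$ one has $x^{\mathrm T}L^{(1)}x=x^{\mathrm T}L^{+}x$. Writing $x=Lw$ and using $L^{\mathrm T}=L$ together with $LL^{(1)}L=L$, we obtain $x^{\mathrm T}L^{(1)}x=w^{\mathrm T}LL^{(1)}Lw=w^{\mathrm T}Lw$; the identical computation with $L^{+}$ (which is itself a $\{1\}$-inverse) yields $x^{\mathrm T}L^{+}x=w^{\mathrm T}Lw$, so the two coincide. An alternative route avoiding $L^{+}$ entirely is to note that $v:=L^{(1)}(e_i-e_j)$ is a solution of $Lv=e_i-e_j$: indeed $e_i-e_j=Lw$ implies $LL^{(1)}(e_i-e_j)=LL^{(1)}Lw=Lw=e_i-e_j$, so by the second description of $r_{ij}(G)$ above the conclusion follows at once.

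Taking $x=e_i-e_j$, which lies in $\mathrm{range}(L)$ by connectedness, gives $r_{ij}(G)=(e_i-e_j)^{\mathrm T}L^{(1)}(e_i-e_j)$. Expanding the quadratic form $(e_i-e_j)^{\mathrm T}M(e_i-e_j)=M_{ii}-M_{ij}-M_{ji}+M_{jj}$, valid for an arbitrary $n\times n$ matrix $M$, then yields the asserted identity $r_{ij}(G)=(\mathcal{L}_G^{(1)})_{ii}+(\mathcal{L}_G^{(1)})_{jj}-(\mathcal{L}_G^{(1)})_{ij}-(\mathcal{L}_G^{(1)})_{ji}$. The one step that genuinely needs care is this passage from the fixed pseudoinverse to an arbitrary $\{1\}$-inverse: one must check that the value of the quadratic form on $\mathrm{range}(L)$ is unchanged, and that is precisely where the hypothesis $LL^{(1)}L=L$ enters. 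The symmetry of $L$ makes the cancellation clean, so no further property of the particular generalized inverse is required.
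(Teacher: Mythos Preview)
Your argument is correct. The paper itself does not prove this lemma: it is quoted without proof from the cited references \cite{Bapat04,zhou17} as a preliminary fact, so there is no ``paper's own proof'' to compare against. That said, your proof is the standard one: the only substantive point is that the quadratic form $(e_i-e_j)^{\mathrm T}M(e_i-e_j)$ takes the same value for every $\{1\}$-inverse $M$ of $L$, and you establish this cleanly by writing $e_i-e_j=Lw$ (possible since $e_i-e_j\in\mathbf{1}^{\perp}=\mathrm{range}(L)$ by connectedness) and invoking $LL^{(1)}L=L$ together with $L^{\mathrm T}=L$. The alternative route you sketch---checking directly that $v=L^{(1)}(e_i-e_j)$ is a potential vector and reading off $r_{ij}=v_i-v_j$---is equally valid and is in fact closer in spirit to Bapat's original treatment.
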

Let $\mbox{\rm tr}(A)$ denote the trace of the square matrix $A$, and let $e$ denote an all-ones column vector.
\begin{lem}\rm\cite{Sun}\label{2.5}
Let $G$ be a connected graph of order $n$. Then
\begin{eqnarray*}
Kf(G)=n\mbox{\rm tr}(\mathcal{L}_G^{(1)})-e^\top\mathcal{L}_G^{(1)}e.
\end{eqnarray*}
\end{lem}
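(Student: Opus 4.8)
The plan is to deduce the identity directly from Lemma~\ref{2.4} by an elementary summation. Fix an arbitrary $\{1\}$-inverse $X=\mathcal{L}_G^{(1)}$ of $\mathcal{L}_G$. By definition $Kf(G)=\sum_{\{i,j\}\subseteq V(G)}r_{ij}(G)$, where the sum runs over the $\binom{n}{2}$ unordered pairs of distinct vertices; substituting the formula $r_{ij}(G)=X_{ii}+X_{jj}-X_{ij}-X_{ji}$ of Lemma~\ref{2.4}, the summand is symmetric in $i$ and $j$, so I would rewrite $\sum_{\{i,j\}}$ as $\tfrac12\sum_{i\ne j}$ and evaluate the resulting ordered sum term by term.

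For the diagonal contributions, $\sum_{i\ne j}X_{ii}=\sum_{i\ne j}X_{jj}=(n-1)\mbox{\rm tr}(X)$, since each of the $n$ diagonal entries is counted once for each of the $n-1$ admissible values of the partner index. For the off-diagonal contributions, $\sum_{i\ne j}X_{ij}=\sum_{i\ne j}X_{ji}=e^\top X e-\mbox{\rm tr}(X)$, because $e^\top X e=\sum_{i,j}X_{ij}$ is the total of all entries of $X$ while $\mbox{\rm tr}(X)$ is the diagonal part that must be removed. Assembling the four pieces, $Kf(G)=\tfrac12\bigl(2(n-1)\mbox{\rm tr}(X)-2(e^\top X e-\mbox{\rm tr}(X))\bigr)=n\,\mbox{\rm tr}(X)-e^\top X e$, which is the asserted equality.

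There is no real obstacle in this argument: the only point needing a little attention is the passage between unordered and ordered index pairs and the attendant factor $\tfrac12$. It may also be worth remarking that the right-hand side $n\,\mbox{\rm tr}(\mathcal{L}_G^{(1)})-e^\top\mathcal{L}_G^{(1)}e$ does not depend on which $\{1\}$-inverse is chosen: this is immediate since each resistance distance $r_{ij}(G)$ in Lemma~\ref{2.4} already has this property and $Kf(G)$ is their sum, so the statement is well posed.
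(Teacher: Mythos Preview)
Your argument is correct. The computation is the standard one: expand $Kf(G)=\sum_{\{i,j\}}r_{ij}(G)$ using Lemma~\ref{2.4}, symmetrize to an ordered sum, and collect the diagonal and off-diagonal contributions exactly as you did. The bookkeeping $(n-1)\mbox{\rm tr}(X)-\bigl(e^\top Xe-\mbox{\rm tr}(X)\bigr)=n\,\mbox{\rm tr}(X)-e^\top Xe$ is right, and your remark that the result is independent of the particular $\{1\}$-inverse is a useful sanity check.

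As for comparison: the paper does not actually prove Lemma~\ref{2.5}; it is quoted from \cite{Sun} without argument. So there is no ``paper's own proof'' to compare against here. Your derivation is precisely the elementary one that underlies the cited result, and nothing more is needed.
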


\begin{lem}\rm\cite{Zhou2}\label{2.6}
Let $G$ be a connected graph of order $n$. Then $\begin{pmatrix}\mathcal{L}_G(i)^{-1}&0\\0&0\end{pmatrix}\in\mathbb{R}^{n\times n}$ is a symmetric $\{1\}$-inverse of $\mathcal{L}_G$, where $i$ is the vertex corresponding to the last row of $\mathcal{L}_G$.
\end{lem}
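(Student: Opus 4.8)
The plan is to verify the two defining properties — symmetry and the $\{1\}$-inverse equation $\mathcal{L}_GX\mathcal{L}_G=\mathcal{L}_G$ — directly from the block structure of the Laplacian. First I would write
\[
\mathcal{L}_G=\begin{pmatrix}L&b\\b^\top&c\end{pmatrix},\qquad L=\mathcal{L}_G(i)\in\mathbb{R}^{(n-1)\times(n-1)},\ b\in\mathbb{R}^{n-1},\ c\in\mathbb{R},
\]
so that $L$ is the principal submatrix obtained by deleting the last row and column. Since $\mathcal{L}_G$ is symmetric, $L$ is symmetric; and since $G$ is connected, $L=\mathcal{L}_G(i)$ is a nonsingular $M$-matrix (as recalled in the introduction, cf.\ \cite{Kirkland}), so $L^{-1}$ exists and is symmetric. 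This immediately shows that $X:=\begin{pmatrix}L^{-1}&0\\0&0\end{pmatrix}$ is symmetric, and it reduces the problem to the single identity $\mathcal{L}_GX\mathcal{L}_G=\mathcal{L}_G$.

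The key input is that the all-ones vector $e\in\mathbb{R}^n$ spans the kernel of $\mathcal{L}_G$. Writing $e=\begin{pmatrix}\hat e\\1\end{pmatrix}$ with $\hat e\in\mathbb{R}^{n-1}$ the all-ones vector, the relation $\mathcal{L}_Ge=0$ expands to $L\hat e+b=0$ and $b^\top\hat e+c=0$, i.e.\ $b=-L\hat e$ and $c=-b^\top\hat e=\hat e^\top L\hat e$. I would then multiply out the blocks: $\mathcal{L}_GX=\begin{pmatrix}I&0\\b^\top L^{-1}&0\end{pmatrix}$, hence $\mathcal{L}_GX\mathcal{L}_G=\begin{pmatrix}L&b\\b^\top&b^\top L^{-1}b\end{pmatrix}$, and finally use $b=-L\hat e$ to get $b^\top L^{-1}b=\hat e^\top L L^{-1} L\hat e=\hat e^\top L\hat e=c$, so the product equals $\mathcal{L}_G$.

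There is not really a hard step here: the content is entirely in recognizing that connectedness supplies both the invertibility of $\mathcal{L}_G(i)$ and the identity $\mathcal{L}_Ge=0$, and that the latter is precisely what forces the $(n,n)$-entry $b^\top L^{-1}b$ of $\mathcal{L}_GX\mathcal{L}_G$ to match $c$. If anything needs care, it is only the bookkeeping of the partition (which vertex is deleted, and that $\hat e$ denotes the shorter all-ones vector); once that is fixed the computation is routine.
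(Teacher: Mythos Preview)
Your proof is correct, but note that the paper does not actually prove this lemma: it is stated as a preliminary result cited from \cite{Zhou2} and used without argument. Your block computation is the standard direct verification and is exactly what one would expect a reader to supply if asked; there is nothing to compare against in the paper itself.
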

\section{Inverse Perron values of uniform hypergraphs}
In the following theorem, the relationship between inverse Perron values and connectivity of a hypergraph is presented.
\begin{thm}\rm\label{connected.max}
Let $\mathcal{G}$ be a $k$-uniform hypergraph. Then the following are equivalent:\\
$\rm (1)$ $\mathcal{G}$ is connected.\\
$\rm (2)$ $\alpha_j(\mathcal{G})>0$ for all $j\in V(\mathcal{G})$.\\
$\rm (3)$ $\alpha_j(\mathcal{G})>0$ for some $j\in V(\mathcal{G})$.

\end{thm}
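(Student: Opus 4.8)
The plan is to prove the cycle of implications $(1)\Rightarrow(2)\Rightarrow(3)\Rightarrow(1)$, where the middle implication is trivial since $V(\mathcal{G})\neq\emptyset$. The implication $(1)\Rightarrow(2)$ is also essentially immediate from what is already available: if $\mathcal{G}$ is connected, then by Lemma~\ref{connected} we have $\alpha(\mathcal{G})>0$, and since $\alpha(\mathcal{G})=\min_{j\in V(\mathcal{G})}\alpha_j(\mathcal{G})$, every $\alpha_j(\mathcal{G})$ is bounded below by this positive number, hence positive. So the entire content of the theorem is the implication $(3)\Rightarrow(1)$: if some single inverse Perron value $\alpha_j(\mathcal{G})$ is positive, then $\mathcal{G}$ must be connected.

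For $(3)\Rightarrow(1)$ I would argue by contrapositive: assume $\mathcal{G}$ is disconnected and show $\alpha_j(\mathcal{G})=0$ for every $j$. Let $j\in V(\mathcal{G})$ be arbitrary, and let $\mathcal{C}$ be a connected component of $\mathcal{G}$ not containing $j$ (such a component exists because $\mathcal{G}$ is disconnected and removing $j$'s component still leaves something). Pick any vertex $v\in V(\mathcal{C})$. Now consider the vector $\mathbf{x}\in\mathbb{R}^n_+$ with $x_v=1$ and all other coordinates $0$; this is feasible in the minimization defining $\alpha_j(\mathcal{G})$ since $\sum_i x_i^k = 1$ and $x_j=0$. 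I claim $\mathcal{L}_\mathcal{G}\mathbf{x}^k = 0$ for this choice. Indeed, writing $\mathcal{L}_\mathcal{G}\mathbf{x}^k = \mathcal{D}_\mathcal{G}\mathbf{x}^k - \mathcal{A}_\mathcal{G}\mathbf{x}^k$, the diagonal term is $\sum_i d_i x_i^k = d_v$, while $\mathcal{A}_\mathcal{G}\mathbf{x}^k = \sum_{\{i_1,\ldots,i_k\}\in E(\mathcal{G})} k\, a_{i_1\cdots i_k} x_{i_1}\cdots x_{i_k}$ gets a nonzero contribution only from edges all of whose vertices have nonzero $\mathbf{x}$-coordinate — but the only such vertex is $v$, and no edge can consist solely of the single vertex $v$ (edges have size $k\geq 2$ and distinct vertices). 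Hmm — this gives $\mathcal{L}_\mathcal{G}\mathbf{x}^k = d_v > 0$ if $d_v\geq 1$, not $0$. So the one-hot vector is the wrong test vector.

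The right test vector is the one that is constant on $V(\mathcal{C})$: set $x_i = |V(\mathcal{C})|^{-1/k}$ for $i\in V(\mathcal{C})$ and $x_i=0$ otherwise. Then $\sum_i x_i^k=1$, and $x_j=0$ because $j\notin V(\mathcal{C})$, so $\mathbf{x}$ is feasible for $\alpha_j(\mathcal{G})$. Since $\mathcal{L}_\mathcal{G}\mathbf{x}^k\geq\alpha_j(\mathcal{G})\geq 0$ always, it suffices to check $\mathcal{L}_\mathcal{G}\mathbf{x}^k\leq 0$, i.e. $=0$. The key computation: $\mathcal{L}_\mathcal{G}\mathbf{x}^k$ decomposes as a sum over edges, and only edges $e$ contained in $V(\mathcal{C})$ contribute (edges meeting $V(\mathcal{C})$ and its complement do not exist, since $\mathcal{C}$ is a full component; edges disjoint from $V(\mathcal{C})$ contribute $0$). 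For $\mathcal{C}$ alone, $\mathbf{x}$ restricted to $V(\mathcal{C})$ is the all-equal vector, and a standard identity — the all-equal nonnegative vector is an H-eigenvector of $\mathcal{L}_\mathcal{C}$ with eigenvalue $0$, equivalently $\mathcal{L}_\mathcal{C}\mathbf{y}^k=0$ when $\mathbf{y}$ is constant, since for each edge the Laplacian term $\sum_{i\in e}y_i^k - k\prod_{i\in e}y_i$ vanishes when all $y_i$ are equal — gives $\mathcal{L}_\mathcal{G}\mathbf{x}^k=0$. Hence $\alpha_j(\mathcal{G})=0$, and since $j$ was arbitrary, $(3)$ fails. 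The main obstacle, such as it is, is simply being careful about how the Laplacian tensor form $\mathcal{L}_\mathcal{G}\mathbf{x}^k$ splits across components and across edges of a component, and citing (or recalling) the edgewise identity $\sum_{i\in e}x_i^k = k\prod_{i\in e}x_i$ for constant vectors; everything else is bookkeeping.
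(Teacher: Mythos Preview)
Your proposal is correct and follows essentially the same route as the paper: the cycle $(1)\Rightarrow(2)\Rightarrow(3)\Rightarrow(1)$, with $(1)\Rightarrow(2)$ via Lemma~\ref{connected} and the contrapositive of $(3)\Rightarrow(1)$ established by taking the test vector constant on a component $\mathcal{C}$ not containing $j$, so that each edgewise term $\sum_{i\in e}x_i^k-k\prod_{i\in e}x_i$ vanishes. The only difference is cosmetic: your write-up includes an abandoned one-hot attempt before arriving at the correct test vector, which you should simply excise in a final version.
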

\begin{proof}
(1)$\Longrightarrow$(2). If $\mathcal{G}$ is connected, then by Lemma \ref{connected}, we know that $\alpha_j(\mathcal{G})>0$ for all $j\in V(\mathcal{G})$.

(2)$\Longrightarrow$(3). Obviously.

(3)$\Longrightarrow$(1). Suppose that $\mathcal{G}$ is disconnected. For any $j\in V(\mathcal{G})$, let $\mathcal{G}_1$ be the component of $\mathcal{G}$ such that $j\notin V(\mathcal{G}_1)$. Let $\textbf{x}=(x_1,\ldots,x_{|V(\mathcal{G})|})^\mathrm{T}$ be the vector satisfying
\[{x_i} = \left\{ \begin{gathered}
{{|V({\mathcal{G}_1})|}^{-\frac{1}{k}}},{\kern 1pt} {\kern 1pt} {\kern 1pt} {\kern 1pt} {\kern 1pt} {\kern 1pt} {\kern 1pt} {\kern 1pt} {\kern 1pt} {\kern 1pt} {\kern 1pt} {\kern 1pt} {\kern 1pt} {\kern 1pt} {\kern 1pt} {\kern 1pt} {\kern 1pt} {\kern 1pt} {\kern 1pt} {\kern 1pt} if{\kern 1pt} {\kern 1pt} {\kern 1pt} {\kern 1pt} i \in V({\mathcal{G}_1}), \hfill \\
  0,{\kern 1pt} {\kern 1pt} {\kern 1pt} {\kern 1pt} {\kern 1pt} {\kern 1pt} {\kern 1pt} {\kern 1pt} {\kern 1pt} {\kern 1pt} {\kern 1pt} {\kern 1pt} {\kern 1pt} {\kern 1pt} {\kern 1pt} {\kern 1pt} {\kern 1pt} {\kern 1pt} {\kern 1pt} {\kern 1pt} {\kern 1pt} {\kern 1pt} {\kern 1pt} {\kern 1pt} {\kern 1pt} {\kern 1pt} {\kern 1pt} {\kern 1pt} {\kern 1pt} {\kern 1pt} {\kern 1pt} {\kern 1pt} {\kern 1pt} {\kern 1pt} {\kern 1pt} {\kern 1pt} {\kern 1pt} {\kern 1pt} {\kern 1pt} {\kern 1pt} {\kern 1pt} {\kern 1pt} {\kern 1pt} {\kern 1pt} {\kern 1pt} {\kern 1pt} {\kern 1pt} {\kern 1pt} {\kern 1pt} {\kern 1pt} {\kern 1pt} {\kern 1pt} ~~~~ otherwise. \hfill \\
\end{gathered}  \right.\]
Clearly, we have $\sum\limits_{i=1}^n x_i^k=1$. Then we have $0\leq\alpha_j (\mathcal{G})\leq\mathcal{L}_\mathcal{G}\mathbf{x}^k=0$ for any $j\in V(\mathcal{G})$, a contradiction to (3). Hence $\mathcal{G}$ is connected if (3) holds.
\end{proof}

The \textit{bipartition width} of a hypergraph $\mathcal{G}$ is defined as \cite{bipartition2, bipartition1}
\[
{\rm bw}
(\mathcal{G})=\min \left\{|E(S,\overline{S})|: S\subseteq V(\mathcal{G}),|S|=\left\lfloor\frac{n}{2}\right\rfloor\right\},
\]
where $\left\lfloor\frac{n}{2}\right\rfloor$ denotes the maximum integer not larger than $\frac{n}{2}$. The computation of bw$(\mathcal{G})$ is very difficult even for the graph case. In \cite{Mohar}, Mohar and Poljak use the algebraic connectivity to obtain a lower bound on the bipartition width of a graph. In the following, we use the inverse Perron values to obtain a lower bound on the bipartition width of a uniform hypergraph.
\begin{thm}\label{bwthm}\rm
Let $\mathcal{G}$ be a $k$-uniform  hypergraph with $n$ vertices. Then
\[
{\rm bw(\mathcal{G})} \geq\frac{n+(-1)^n}{k(n+1)}\sum\limits_{j = 1}^n {{\alpha _j}} (\mathcal{G}).
\]
\end{thm}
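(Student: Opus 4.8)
The plan is to bound $\mathrm{bw}(\mathcal{G})$ from below by exhibiting, for any vertex set $S$ with $|S|=\lfloor n/2\rfloor$, a feasible vector in the optimization defining each $\alpha_j(\mathcal{G})$ whose Rayleigh-type value $\mathcal{L}_\mathcal{G}\mathbf{x}^k$ is controlled by $|E(S,\overline{S})|$, and then averaging over $j$. Concretely, fix such an $S$ attaining $\mathrm{bw}(\mathcal{G})$, and write $s=|S|=\lfloor n/2\rfloor$, so $|\overline{S}|=n-s=\lceil n/2\rceil$; note $s(n-s)=\frac{n^2-(-1)^n}{4}$ when $n$ is odd and $\frac{n^2}{4}$ when $n$ is even, but in both cases $s(n-s)=\frac{n^2-\varepsilon}{4}$ with the bookkeeping that produces the factor $n+(-1)^n$ after combining with the $n+1$ denominator; I will keep this parity computation for the end.

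First I would, for a given $j$, choose $\mathbf{x}^{(j)}$ to be a suitably scaled indicator of whichever of $S,\overline{S}$ does \emph{not} contain $j$: if $j\notin S$ set $x_i=c$ for $i\in S$ and $x_i=0$ otherwise, with $c=s^{-1/k}$ so that $\sum_i x_i^k=1$ and $x_j=0$; symmetrically if $j\in S$ use $\overline S$ with $c=(n-s)^{-1/k}$. This $\mathbf{x}^{(j)}$ is feasible for the program defining $\alpha_j(\mathcal{G})$, hence $\alpha_j(\mathcal{G})\le \mathcal{L}_\mathcal{G}(\mathbf{x}^{(j)})^k$. Next I would compute $\mathcal{L}_\mathcal{G}\mathbf{x}^k$ for an indicator-type vector: using $\mathcal{L}_\mathcal{G}=\mathcal{D}_\mathcal{G}-\mathcal{A}_\mathcal{G}$ and the standard identity $\mathcal{L}_\mathcal{G}\mathbf{x}^k=\sum_{e\in E(\mathcal{G})}\big(\sum_{i\in e}x_i^k - k\prod_{i\in e}x_i\big)$ (the hypergraph analogue of the edge-difference form of the Laplacian), each edge entirely inside the support contributes $0$, each edge disjoint from the support contributes $0$, and each edge in the cut $E(S,\overline S)$ contributes exactly $(\text{number of its vertices in the support})\cdot c^k \le (k-1)c^k$. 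So $\mathcal{L}_\mathcal{G}(\mathbf{x}^{(j)})^k \le (k-1)c^k\,|E(S,\overline S)| \le \frac{1}{\min(s,n-s)}|E(S,\overline S)|\cdot(k-1)$; more carefully, tracking which side the support is on, summing $\alpha_j$ over $j\in S$ uses $c^k=(n-s)^{-1}$ and over $j\notin S$ uses $c^k=s^{-1}$, giving $\sum_{j=1}^n \alpha_j(\mathcal{G}) \le |E(S,\overline S)|\cdot(k-1)\cdot\big(\frac{s}{n-s}+\frac{n-s}{s}\big)\le \mathrm{bw}(\mathcal{G})\cdot\frac{k-1}{\ ?\ }$ — here I must be a little more delicate, because the cruder bound $(k-1)$ per cut edge will not by itself yield the stated constant; I expect the right accounting is to note that for a cut edge $e$, if $p_e=|e\cap S|$ then its two contributions across all $j$ are $p_e\cdot\frac{1}{n-s}$-weighted and $(k-p_e)\cdot\frac{1}{s}$-weighted, and $\sum_j$ of these telescopes against $s(n-s)$ in the denominator.

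The main obstacle, then, is the constant-chasing in this averaging step: one must show $\sum_{j=1}^n \mathcal{L}_\mathcal{G}(\mathbf{x}^{(j)})^k \le \frac{k(n+1)}{n+(-1)^n}\,\mathrm{bw}(\mathcal{G})$, i.e. that the per-cut-edge total weight is at most $\frac{k(n+1)}{n+(-1)^n}$. This is where the identity $s(n-s)=\big\lfloor n/2\big\rfloor\big\lceil n/2\big\rceil=\frac{(n+(-1)^{n+1}) \cdot \text{stuff}}{4}$ must be invoked: writing $s=\frac{n-1}{2},\,n-s=\frac{n+1}{2}$ for odd $n$ and $s=n-s=\frac n2$ for even $n$, one checks $\frac{1}{s}+\frac{1}{n-s}=\frac{n}{s(n-s)}$ equals $\frac{4n}{n^2-1}$ for odd $n$ and $\frac{4}{n}$ for even $n$, and in both cases this is $\frac{4(n+1)}{n^2+n}\cdot(\text{parity correction})=\frac{4}{\,n+(-1)^n\,}\cdot\frac{n+1}{n+1}$-type expression matching $\frac{4}{n+(-1)^n}$. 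Multiplying the per-edge vertex-count bound (which sums the two sides' contributions to at most $k$ times the relevant reciprocal-weight, not $k-1$ times, once one does \emph{not} throw away the "$\le k-1$" slack but instead keeps the exact $p_e$ and $k-p_e$ and bounds $p_e\cdot\frac{1}{n-s}+(k-p_e)\cdot\frac1s\le k\max(\frac1s,\frac1{n-s})$ — actually the clean bound is $\le k\cdot\frac14(\frac1s+\frac1{n-s})\cdot 4$, so I will instead sum $\frac{p_e}{n-s}+\frac{k-p_e}{s}$ and bound it by $k\big(\frac1s+\frac1{n-s}\big)=\frac{4k}{n+(-1)^n}$ after the parity identity) by $\mathrm{bw}(\mathcal{G})$ and rearranging yields exactly $\mathrm{bw}(\mathcal{G})\ge\frac{n+(-1)^n}{k(n+1)}\sum_j\alpha_j$ once the extra $n+1$ in the denominator is produced — and that $n+1$ I expect enters because one actually should \emph{not} use a $\{0,c\}$ indicator but a slightly shifted vector (constant $a$ on one side, constant $b\ne 0$ on the other, $0$ at $j$) to get the sharp constant; I would first try the plain indicator and, if the constant comes out as $\frac{1}{k}\cdot\frac{4}{n+(-1)^n}$ instead of with the $n+1$, revisit with the two-value vector, since the $n+1$ strongly suggests a vector supported on $n$ coordinates whose values are tuned so that $\sum_j(\mathcal{L}_\mathcal{G}\mathbf{x}^{(j)})^k$ picks up a harmonic-sum factor of order $\frac{n}{n+1}$.
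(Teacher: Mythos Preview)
Your approach is exactly the paper's: use the normalized indicator of whichever side of an optimal bipartition $S_0$ does \emph{not} contain $j$, compute $\mathcal{L}_\mathcal{G}\mathbf{x}^k$ edge by edge, and sum over $j$. The idea is right; the execution in your summation step is not.

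The concrete error is in the per-edge accounting after summing over $j$. With $s=|S_0|$, $p_e=|e\cap S_0|$, you have for each of the $s$ vertices $j\in S_0$ the bound $\alpha_j\le \sum_{e}(k-p_e)/(n-s)$, and for each of the $n-s$ vertices $j\in\overline{S_0}$ the bound $\alpha_j\le \sum_{e}p_e/s$ (sums over cut edges). Hence
\[
\sum_{j=1}^n\alpha_j(\mathcal{G})\ \le\ \frac{s}{\,n-s\,}\sum_e(k-p_e)\ +\ \frac{n-s}{s}\sum_e p_e,
\]
not $\sum_e\bigl(\tfrac{p_e}{n-s}+\tfrac{k-p_e}{s}\bigr)$ as you wrote; you dropped (or swapped) the multipliers $s$ and $n-s$. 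Writing $T=\sum_e p_e$ and $\sum_e(k-p_e)=k\,\mathrm{bw}(\mathcal{G})-T$, the right-hand side is linear in $T$. For $n$ even, $s=n-s$ and it collapses exactly to $k\,\mathrm{bw}(\mathcal{G})$. For $n$ odd, $s=\tfrac{n-1}{2}<n-s=\tfrac{n+1}{2}$, the coefficient of $T$ is positive, and bounding at $T\le k\,\mathrm{bw}(\mathcal{G})$ gives $\tfrac{n-s}{s}\,k\,\mathrm{bw}(\mathcal{G})=\tfrac{k(n+1)}{n-1}\mathrm{bw}(\mathcal{G})$. In either case
\[
\sum_{j=1}^n\alpha_j(\mathcal{G})\ \le\ \frac{k(n+1)}{\,n+(-1)^n\,}\,\mathrm{bw}(\mathcal{G}),
\]
which is the claim. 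So the plain $\{0,c\}$ indicator already yields the stated constant; your speculation that a two-level test vector is needed to manufacture the $n+1$ is a red herring born of the bookkeeping slip.
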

\begin{proof}
Suppose that $S_0\subseteq V(\mathcal{G})$ satisfying $|S_0|=\left\lfloor\frac{n}{2}\right\rfloor$ and $|E(S_0,\overline{S_0})|={\rm bw}(\mathcal{G})$.
 Let $\textbf{x}=(x_1,\ldots,x_{n})^\mathrm{T}$ be the vector satisfying
\[{x_i} = \left\{ \begin{gathered}
  |S_0{|^{ - \frac{1}
{k}}},~~~~~i \in S_0, \hfill \\
  0,~~~~~~~~~~~~i \in \overline{S_0}. \hfill \\
\end{gathered}  \right.\]
Then $\sum\limits_{i=1}^n x_i^k=1$. For $j\in \overline{S_0}$, we can obtain
\begin{align*}
\alpha_j(\mathcal{G})&\leq \mathcal{L}_{\mathcal{G}}\textbf{x}^k=\sum\limits_{\{i_1,\ldots,i_k\}\in E(\mathcal{G})}\left(x_{i_1}^k+\cdots +x_{i_k}^k-kx_{i_1}\cdots x_{i_k}\right)
\end{align*}
\begin{align*}\label{bw3.1}
\alpha_j(\mathcal{G})&\leq\sum\limits_{\{i_1,\ldots,i_k\}\in E(S_0,\overline{S_0})}\left(x_{i_1}^k+\cdots + x_{i_k}^k-kx_{i_1}\cdots x_{i_k}\right)\\
&=\frac{1}{|S_0|}{\sum\limits_{e\in E(S_0,\overline{S_0})}| e\cap S_0|}
=\frac{t(S_0)\rm bw(\mathcal{G})}{|S_0|},\tag{3.1}
\end{align*}
where $t(S_0)=\frac{1}{\left|E(S_0,\overline{S_0})\right|}{\sum\limits_{e\in E(S_0,\overline{S_0})}| e\cap S_0|}$.

Similarly, for $j\in {S_0}$, we can obtain
\begin{align*}\label{bw3.2}
\alpha_j(\mathcal{G})\leq 
\frac{(k-t(S_0))\rm bw(\mathcal{G})}{|\overline{S_0}|}.\tag{3.2}
\end{align*}
Combining (\ref{bw3.1}) and (\ref{bw3.2}), we can get
\[
\sum\limits_{j=1}^n \alpha_j(\mathcal{G})= \sum\limits_{j\in {S_0}} \alpha_j(\mathcal{G})+ \sum\limits_{j\in \overline{S_0}} \alpha_j(\mathcal{G})
\leq \frac{|S_0|(k-t(S_0))\rm bw(\mathcal{G})}{|\overline{S_0}|}+ \frac{|\overline{S_0}|t(S_0)\rm bw(\mathcal{G})}{|S_0|}.
\]
If $n$ is even, then $|S_0|=|\overline{S_0}|$ and ${\rm bw(\mathcal{G})}\geq\frac{1}{k}\sum\limits_{j=1}^n \alpha_j(\mathcal{G})$. If $n$ is odd, then $|S_0|=|\overline{S_0}|-1=\frac{n-1}{2}$ and
\[
\sum\limits_{j=1}^n \alpha_j(\mathcal{G})\leq k\frac{|\overline {S_0}|}{|{S_0}|}{\rm bw}(\mathcal{G})
=\frac{k(n+1)\rm bw(\mathcal{G})}{n-1},~{\rm bw(\mathcal{G})}\geq\frac{n-1}{k(n+1)}\sum\limits_{j=1}^n \alpha_j(\mathcal{G}).
\]
\end{proof}
The \textit{isoperimetric number} of a $k$-uniform hypergraph $\mathcal{G}$ is defined as
\[
i(\mathcal{G})=\min\left\{\frac{|E(S,\overline{S})|}{|S|}: S\subseteq V(\mathcal{G}), 0\leq S\leq \frac{|V(\mathcal{G})|}{2}\right\}.
\]
Let $\beta(\mathcal{G})= \max\limits_{j\in V(\mathcal{G})}\alpha_j(\mathcal{G})$ denote the maximum inverse Perron value of $\mathcal{G}$. In \cite{liwei}, it is shown that $i(\mathcal{G})\geq \frac{2}{k}\alpha(\mathcal{G})$. We improve it as follows.
\begin{thm}\label{igthm3.4}\rm
Let $\mathcal{G}$ be a $k$-uniform hypergraph. Then
\[
i(\mathcal{G})\geq\frac{\alpha(\mathcal{G})+\beta(\mathcal{G})}{k}.
\]
\end{thm}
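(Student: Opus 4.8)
The plan is to mimic the proof of Theorem~\ref{bwthm}, but instead of summing all inverse Perron values we will pick out the two most favorable vertices. Let $S_0\subseteq V(\mathcal{G})$ with $|S_0|\le \frac{|V(\mathcal{G})|}{2}$ be a set attaining the isoperimetric number, so $i(\mathcal{G})=\frac{|E(S_0,\overline{S_0})|}{|S_0|}$. As in the proof of Theorem~\ref{bwthm}, I would use the test vector $\mathbf{x}$ with $x_i=|S_0|^{-1/k}$ for $i\in S_0$ and $x_i=0$ for $i\in\overline{S_0}$; this satisfies $\sum_i x_i^k=1$ and $x_j=0$ for every $j\in\overline{S_0}$, so for any such $j$ we get $\alpha_j(\mathcal{G})\le\mathcal{L}_\mathcal{G}\mathbf{x}^k$. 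Only edges in $E(S_0,\overline{S_0})$ contribute (edges inside $S_0$ give $0$, edges inside $\overline{S_0}$ vanish), and each such edge $e$ contributes $\frac{|e\cap S_0|}{|S_0|}$ since the product term $kx_{i_1}\cdots x_{i_k}$ is $0$. Hence $\alpha_j(\mathcal{G})\le\frac{1}{|S_0|}\sum_{e\in E(S_0,\overline{S_0})}|e\cap S_0|$ for all $j\in\overline{S_0}$, and since $|e\cap S_0|\le k-1$ for a cut edge (it meets $\overline{S_0}$ too), this is $\le\frac{(k-1)|E(S_0,\overline{S_0})|}{|S_0|}=(k-1)\,i(\mathcal{G})$. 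That already recovers essentially the bound of \cite{liwei}; the point is to do better using a second vertex.

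The refinement is to symmetrize: run the same argument with the vector supported on $\overline{S_0}$, namely $y_i=|\overline{S_0}|^{-1/k}$ for $i\in\overline{S_0}$ and $0$ on $S_0$. Then for every $j\in S_0$ we get $\alpha_j(\mathcal{G})\le\mathcal{L}_\mathcal{G}\mathbf{y}^k=\frac{1}{|\overline{S_0}|}\sum_{e\in E(S_0,\overline{S_0})}|e\cap\overline{S_0}|$. Write $a=\frac{1}{|S_0|}\sum_{e}|e\cap S_0|$ and $b=\frac{1}{|\overline{S_0}|}\sum_{e}|e\cap\overline{S_0}|$; note $|S_0|a+|\overline{S_0}|b=\sum_e(|e\cap S_0|+|e\cap\overline{S_0}|)=k\,|E(S_0,\overline{S_0})|$. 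Now $\alpha(\mathcal{G})=\min_j\alpha_j(\mathcal{G})\le\min\{a,b\}$ and $\beta(\mathcal{G})=\max_j\alpha_j(\mathcal{G})$; but more carefully, at least one vertex in $\overline{S_0}$ achieves $\alpha_j\le a$ and at least one vertex in $S_0$ achieves $\alpha_j\le b$, so $\{$the smaller of those two$\}\le\min\{a,b\}$ and the larger is $\le\max\{a,b\}$; in particular $\alpha(\mathcal{G})+\beta(\mathcal{G})\le a+b$ (one of $a,b$ bounds some $\alpha_j$ that is $\ge\alpha(\mathcal{G})$ and the other bounds a vertex whose value is $\ge\beta(\mathcal{G})$ only if $\beta$ is attained on the opposite side — this is the delicate point). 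Actually the clean statement is: there exist $j_1\in\overline{S_0}$, $j_2\in S_0$ with $\alpha_{j_1}(\mathcal{G})\le a$, $\alpha_{j_2}(\mathcal{G})\le b$, hence $\alpha(\mathcal{G})\le\min\{\alpha_{j_1},\alpha_{j_2}\}$ and $\beta(\mathcal{G})\ge\max\{\alpha_{j_1},\alpha_{j_2}\}$ — no, $\beta$ is a max over \emph{all} vertices so that direction is wrong. The correct extraction is $\alpha(\mathcal{G})+\beta(\mathcal{G})\le\alpha_{j_1}(\mathcal{G})+\alpha_{j_2}(\mathcal{G})\le a+b$ provided we can guarantee one of $j_1,j_2$ realizes a value $\ge\beta(\mathcal{G})$; since that need not hold, the honest route is simply to observe $2\alpha(\mathcal{G})\le a+b$ won't give the $\beta$ term, so one must instead combine $\alpha(\mathcal{G})\le\min\{a,b\}$ with the bound $\beta(\mathcal{G})\le\max\{a,b\}$, the latter because \emph{every} $\alpha_j$ with $j$ in the side opposite to where $j$ lives is bounded, and every vertex lies in $S_0$ or $\overline{S_0}$, so $\alpha_j\le\max\{a,b\}$ for all $j$, whence $\beta(\mathcal{G})\le\max\{a,b\}$.

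So the key inequality to establish is $\alpha(\mathcal{G})+\beta(\mathcal{G})\le\min\{a,b\}+\max\{a,b\}=a+b$, and then conclude $\alpha(\mathcal{G})+\beta(\mathcal{G})\le a+b=\frac{1}{|S_0|}\sum_e|e\cap S_0|+\frac{1}{|\overline{S_0}|}\sum_e|e\cap\overline{S_0}|$. Finally I would bound the right side by $k\,i(\mathcal{G})$: using $|S_0|\le|\overline{S_0}|$ we have $a\le\frac{1}{|S_0|}\sum_e|e\cap S_0|$ and $b\le\frac{1}{|S_0|}\sum_e|e\cap\overline{S_0}|$, so $a+b\le\frac{1}{|S_0|}\sum_e(|e\cap S_0|+|e\cap\overline{S_0}|)=\frac{k|E(S_0,\overline{S_0})|}{|S_0|}=k\,i(\mathcal{G})$. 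Dividing through gives $i(\mathcal{G})\ge\frac{\alpha(\mathcal{G})+\beta(\mathcal{G})}{k}$. The main obstacle is the bookkeeping in the middle paragraph: making precise why the two one-sided bounds $\alpha(\mathcal{G})\le\min\{a,b\}$ and $\beta(\mathcal{G})\le\max\{a,b\}$ both hold simultaneously — the first because $\alpha$ is the global minimum of all $\alpha_j$ and we have exhibited vertices with $\alpha_j\le a$ and $\alpha_j\le b$, the second because the union of the two test-vector arguments bounds $\alpha_j$ for \emph{every} vertex $j$ of $\mathcal{G}$ (each $j$ misses at least one of the supports), hence bounds the maximum. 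Everything else is the same edge-counting bookkeeping already used in Theorem~\ref{bwthm}.
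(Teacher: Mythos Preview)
Your argument is correct and is essentially the paper's proof: the paper also takes a set $S_1$ realizing $i(\mathcal{G})$, uses the two indicator-type test vectors on $S_1$ and $\overline{S_1}$ to get $\alpha_j(\mathcal{G})\le t(S_1)\,i(\mathcal{G})$ for $j\in\overline{S_1}$ and $\alpha_j(\mathcal{G})\le (k-t(S_1))\,i(\mathcal{G})$ for $j\in S_1$ (these are your $a$ and the upper bound for your $b$ after using $|S_1|\le|\overline{S_1}|$), and then adds to obtain $\alpha(\mathcal{G})+\beta(\mathcal{G})\le k\,i(\mathcal{G})$. Your final extraction $\alpha(\mathcal{G})\le\min\{a,b\}$, $\beta(\mathcal{G})\le\max\{a,b\}$ is exactly the (slightly unspoken) justification behind the paper's one-line ``From (3.3) and (3.4)'' step, so once you prune the false starts in your middle paragraph the two proofs coincide.
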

\begin{proof}
Suppose that $S_1\subseteq V(\mathcal{G})$ satisfying  $0\leq S_1\leq \frac{|V(\mathcal{G})|}{2}$ and $\frac{|E(S_1,\overline{S_1})|}{|S_1|}=i(\mathcal{G})$.
Let $\textbf{x}=(x_1,\ldots,x_{n})^\mathrm{T}$ be the vector satisfying
\[{x_i} = \left\{ \begin{gathered}
  |S_1{|^{ - \frac{1}
{k}}},~~~~~i \in S_1, \hfill \\
  0,~~~~~~~~~~~i \in \overline{S_1}. \hfill \\
\end{gathered}  \right.\]
Then $\sum\limits_{i=1}^n x_i^k=1$. For $j\in \overline{S_1}$, we can obtain
\begin{align*}\label{ig3.3}
\alpha_j(\mathcal{G})\leq \mathcal{L}_{\mathcal{G}}\textbf{x}^k
=\frac{t(S_1){|E(S_1,\overline{S_1})|}}{|S_1|}=t(S_1)i(\mathcal{G}),\tag{3.3}
\end{align*}
where $t(S_1)=\frac{1}{\left|E(S_1,\overline{S_1})\right|}{\sum\limits_{e\in E(S_1,\overline{S_1})}| e\cap S_1|}$.

Similarly, for   $j\in {S_1}$, we can get
\begin{align*}\label{ig3.4}
\alpha_j(\mathcal{G})\leq
\frac{(k-t(S_1))|E(S_1,\overline{S_1})|}{|\overline{S_1}|}\leq (k-t(S_1))i(\mathcal{G}).\tag{3.4}
\end{align*}
From (\ref{ig3.3}) and (\ref{ig3.4}), we have
\[
\alpha(\mathcal{G})+\beta(\mathcal{G})\leq t(S_1)i(\mathcal{G})+(k-t(S_1))i(\mathcal{G})= k i(\mathcal{G}),~i(\mathcal{G})\geq\frac{\alpha(\mathcal{G})+\beta(\mathcal{G})}{k}.
\]
\end{proof}
The distance $d(u,v)$ between two distinct vertices $u$ and $v$ of $G$ is the length of the shortest path connecting them. The eccentricity of a vertex $v$ is ${\rm ecc}(v)=\max\{d(u,v):u\in V(\mathcal{G})\}$. The diameter and radius of $\mathcal{G}$ are defined as diam$(\mathcal{G})=\max\limits_{v\in V(\mathcal{G})}{\rm ecc}(v)$ and ${\rm rad}(\mathcal{G})=\min\limits_{v\in V(\mathcal{G})}{\rm ecc}(v)$, respectively.
\begin{thm}\rm\label{thm3.4}
Let $\mathcal{G}$ be a connected $k$-uniform  hypergraph with $n$ vertices. Then
\[
{\rm ecc}(j)\geq \frac{k}{2(k-1)(n-1)\alpha_j(\mathcal{G})}, ~j\in V(\mathcal{G}).
\]
\end{thm}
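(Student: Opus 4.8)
The plan is to bound $\alpha_j(\mathcal{G})$ \emph{from below}, working straight from its variational definition. Fix an arbitrary $\mathbf{x}\in\mathbb{R}^n_+$ with $\sum_{i=1}^n x_i^k=1$ and $x_j=0$. As in the proof of Theorem~\ref{bwthm} we may write $\mathcal{L}_\mathcal{G}\mathbf{x}^k=\sum_{e\in E(\mathcal{G})}F_e$, where $F_e=\sum_{i\in e}x_i^k-k\prod_{i\in e}x_i\ge 0$ by AM--GM, so it is enough to prove $\mathcal{L}_\mathcal{G}\mathbf{x}^k\ge \frac{1}{(n-1)\,\mathrm{ecc}(j)}$; taking the minimum over such $\mathbf{x}$ then gives $\alpha_j(\mathcal{G})\ge\frac{1}{(n-1)\,\mathrm{ecc}(j)}$, which rearranges to the stated bound (in fact to a slightly stronger one, since $\frac{k}{2(k-1)}\le 1$ for $k\ge 2$). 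Let $w$ be a vertex with $x_w=\max_i x_i$; since $\sum_{i\ne j}x_i^k=1$ is a sum of $n-1$ terms, $x_w^k\ge\frac1{n-1}$, and in particular $w\ne j$. Using that $\mathcal{G}$ is connected, choose a shortest $j$--$w$ path $\mathcal{P}\colon j=v_0,e_1,v_1,\dots,e_m,v_m=w$, so that $m=d(j,w)\le\mathrm{ecc}(j)$ and $e_1,\dots,e_m$ are distinct edges; hence $\mathcal{L}_\mathcal{G}\mathbf{x}^k\ge\sum_{t=1}^m F_{e_t}$.

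The heart of the argument is the pointwise estimate: for any hyperedge $e$ (with $|e|=k$) and any two vertices $u,v\in e$,
\[
F_e=\sum_{i\in e}x_i^k-k\prod_{i\in e}x_i\ \ge\ \bigl(x_u^{k/2}-x_v^{k/2}\bigr)^2 .
\]
To see this, fix $x_u,x_v$ and minimize the left side over the $k-2$ remaining coordinates of $e$. If one of them is $0$ the product vanishes and $F_e\ge x_u^k+x_v^k\ge(x_u^{k/2}-x_v^{k/2})^2$; otherwise, for a fixed value $P$ of their product, AM--GM shows the sum of their $k$-th powers is minimized when they are all equal, reducing the task to minimizing the convex one-variable function $P\mapsto (k-2)P^{k/(k-2)}-k\,x_ux_v\,P$ over $P\ge 0$, whose minimum $-2(x_ux_v)^{k/2}$ occurs at $P=(x_ux_v)^{(k-2)/2}$; adding back $x_u^k+x_v^k$ yields exactly $(x_u^{k/2}-x_v^{k/2})^2$ (the case $k=2$ being trivial, as then $F_e=(x_u-x_v)^2$). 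Applying this to each $e_t$ with $\{u,v\}=\{v_{t-1},v_t\}$, and then Cauchy--Schwarz together with the telescoping identity $\sum_{t=1}^m(x_{v_{t-1}}^{k/2}-x_{v_t}^{k/2})=x_{v_0}^{k/2}-x_w^{k/2}=-x_w^{k/2}$, we obtain
\[
\mathcal{L}_\mathcal{G}\mathbf{x}^k\ \ge\ \sum_{t=1}^m\bigl(x_{v_{t-1}}^{k/2}-x_{v_t}^{k/2}\bigr)^2\ \ge\ \frac1m\,x_w^k\ \ge\ \frac{1}{m(n-1)}\ \ge\ \frac{1}{(n-1)\,\mathrm{ecc}(j)},
\]
which is what was needed.

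The one genuinely non-routine point is the per-edge inequality above: a hyperedge on the path carries $k-2$ vertices that never appear on $\mathcal{P}$ yet couple into $F_{e_t}$ through the product term, and one must argue that they can only increase $F_{e_t}$. The minimization settles this, but some care is needed to separate the interior case (where convexity pins down the optimum) from the boundary case $x_ux_v=0$. Everything else---extracting the path (which exists because $\mathcal{G}$ is connected, via Lemma~\ref{connected}/Theorem~\ref{connected.max}), the Cauchy--Schwarz averaging, and the normalization estimate $x_w^k\ge 1/(n-1)$---is standard.
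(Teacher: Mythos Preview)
Your proof is correct and in fact establishes the stronger inequality $\mathrm{ecc}(j)\ge \dfrac{1}{(n-1)\alpha_j(\mathcal{G})}$, which implies the paper's bound since $\frac{k}{2(k-1)}\le 1$ for $k\ge 2$. The route, however, is genuinely different from the paper's. The paper first applies AM--GM to the product $x_{i_1}\cdots x_{i_k}$ to obtain the global estimate
\[
\alpha_j(\mathcal{G})\ \ge\ \frac{1}{k-1}\sum_{\{s,t\}\subset e\in E(\mathcal{G})}\bigl(x_s^{k/2}-x_t^{k/2}\bigr)^2,
\]
incurring a loss of $1/(k-1)$; it then partially recovers by using \emph{all} pairs inside each path edge (not just the pair $\{v_{t-1},v_t\}$) together with the inequality $(a-c)^2+(c-b)^2\ge\tfrac12(a-b)^2$, yielding an extra factor $k/2$ and the net constant $\frac{k}{2(k-1)}$. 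You bypass both steps with the sharp per-edge lemma $F_e\ge (x_u^{k/2}-x_v^{k/2})^2$, proved by minimizing over the $k-2$ ``off-path'' coordinates; this gives the full constant $1$ directly. The trade-off is that the paper's intermediate inequality (3.7) is an identity-level decomposition that may be useful elsewhere, whereas your lemma is tailored to a single designated pair but is tight (equality when the remaining coordinates equal $(x_ux_v)^{1/2}$). One small remark: your treatment of the degenerate cases is fine, but note that when $x_ux_v=0$ the one-variable function $g(P)=(k-2)P^{k/(k-2)}-kx_ux_vP$ is simply nonnegative with minimum $0$ at $P=0$, so the inequality still holds (with equality), which you allude to but do not spell out.
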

\begin{proof}
For $j\in V(\mathcal{G})$, let $\mathbf{x}=(x_1,\ldots,x_n)^\mathrm{T}\in\mathbb{R}_+^{n}$ satisfying $x_j=0$, $\sum_{i=1}^nx_i^k=1$ and $\alpha_j(\mathcal{G})={\mathcal{L_G}}\mathbf{x}^k$. Then
\begin{align*}\label{shizi3.1}
\alpha_j(\mathcal{G})={\mathcal{L_G}}\mathbf{x}^k
=\sum\limits_{\{i_1,\ldots,i_k\}\in E(\mathcal{G})}\left(x_{i_1}^k+\cdots+x_{i_k}^k- kx_{i_1}\cdots x_{i_k}\right).\tag{3.5}
\end{align*}
From AM-GM inequality, it yields that
\begin{align*}\label{shizi3.2}
\sum\limits_{1\leq s<t\leq k} {x_{i_s}^{\frac{k}{2}} x_{i_{t}}^\frac{k}{2}}
\geq \frac{k(k-1)}{2}\left( \prod\limits_{1\leq s<t\leq k} {x_{i_s}^{\frac{k}{2}} x_{i_{t}}^\frac{k}{2}}\right)^{\frac{2}{k(k-1)}}
=\frac{k(k-1)}{2} x_{i_1}\cdots x_{i_k}.\tag{3.6}
\end{align*}
By (\ref{shizi3.1}) and (\ref{shizi3.2}), we have
\begin{align*}\label{shizi3.3}
&~\alpha_j(\mathcal{G})\geq\sum\limits_{\{i_1,\ldots,i_k\}\in E(\mathcal{G})}\left(x_{i_1}^k+\cdots+x_{i_k}^k- \frac{2}{k-1} \sum\limits_{1\leq s<t\leq k} {x_{i_s}^{\frac{k}{2}} x_{i_{t}}^\frac{k}{2}}\right)\\
&=\frac{1}{k-1}\sum\limits_{\{i_1,\ldots,i_k\}\in E(\mathcal{G})}\sum\limits_{1\leq s<t\leq k}
\left({x_{i_s}^{\frac{k}{2}}- x_{i_{t}}^\frac{k}{2}}\right)^2=\frac{1}{k-1}\sum\limits_{s,t\in e\in E(\mathcal{G})}\left({x_{s}^{\frac{k}{2}}- x_{t}^\frac{k}{2}}\right)^2.\tag{3.7}
\end{align*}
Let $\mathcal{P}=v_{0}e_{1}v_{1}e_{2} \cdots v_{l-1}e_{l}v_{l}$ be the shortest path from vertex $ v_{0}$ to vertex $v_{l}$, where $x_{v_0}=\mathop{\max}\limits_{i\in V(\mathcal{G})}\{x_i\}$, $v_l=j$, $x_{v_l}=0$. Then
\begin{eqnarray*}
&~&\sum_{s,t\in e\in E(\mathcal{G})}\left(x_s^{\frac{k}{2}}-x_t^{\frac{k}{2}}\right)^2
\geq\sum_{s,t\in e\in E(\mathcal{P})}\left(x_s^{\frac{k}{2}}-x_t^{\frac{k}{2}}\right)^2\\
&\geq&\sum_{i=1}^{l}\left(\left(x_{v_{i-1}}^{\frac{k}{2}}-x_{v_{i}}^{\frac{k}{2}}\right)^2+\sum_{u_j\in e_i\backslash\{v_{i-1},v_i\}}\left(\left(x_{v_{i-1}}^{\frac{k}{2}}-x_{u_{j}}^{\frac{k}{2}}\right)^2+
\left(x_{u_{j}}^{ \frac{k}{2}}-x_{v_{i}}^{\frac{k}{2}}\right)^2\right)\right)\\
&\geq& \sum_{i=1}^{l}\left(\left(x_{v_{i-1}}^{\frac{k}{2}}-x_{v_{i}}^{\frac{k}{2}}\right)^2+\frac{1}{2}~\sum_{u_j\in e_i\backslash\{v_{i-1},v_i\}}\left(x_{v_{i-1}}^{\frac{k}{2}}-x_{u_{j}}^{\frac{k}{2}}+x_{u_{j}}^{\frac{k}{2}}-
x_{v_{i}}^{\frac{k}{2}}\right)^2\right)\\
&=&\sum_{i=1}^{l}\left(\left(x_{v_{i-1}}^{\frac{k}{2}}-x_{v_{i}}^{\frac{k}{2}}\right)^2+\frac{k-2}{2} \left(x_{v_{i-1}}^{\frac{k}{2}}-x_{v_{i}}^{\frac{k}{2}}\right)^2\right)\\
&=&\frac{k}{2}\sum_{i=1}^{l}\left(x_{v_{i-1}}^{\frac{k}{2}}-x_{v_{i}}^{\frac{k}{2}}\right)^2.
\end{eqnarray*}
By Cauchy-Schwarz inequality, we obtain
\begin{align*}\label{3.3}
&~\sum_{s,t\in e\in E(\mathcal{G})}\left(x_s^{\frac{k}{2}}-x_t^{\frac{k}{2}}\right)^2
\geq\frac{k}{2}\sum_{i=1}^{l}\left(x_{v_{i-1}}^{\frac{k}{2}}-x_{v_{i}}^{\frac{k}{2}}\right)^2
\geq\frac{k}{2l}\left(\sum_{i=1}^{l}\left(x_{v_{i-1}}^{\frac{k}{2}}-x_{v_{i}}^{\frac{k}{2}} \right)\right)^2\nonumber\\
&=\frac{k}{2l}\left(x_{v_0}^{\frac{k}{2}}-x_{v_l}^{\frac{k}{2}}\right)^2
\geq\frac{k}{2{\rm ecc}(j)}\left(x_{v_0}^{\frac{k}{2}}-x_{v_l}^{\frac{k}{2}}\right)^2\nonumber\geq\frac{k}{2(n-1){\rm ecc}(j)}.\tag{3.8}
\end{align*}
From (\ref{shizi3.3}) and (\ref{3.3}), it yields that
\[\alpha_j(\mathcal{G})\geq \frac{k}{2(k-1)(n-1){\rm ecc}(j)},~{\rm ecc}(j)\geq \frac{k}{2(k-1)(n-1)\alpha_j(\mathcal{G})}.\]
\end{proof}
For a connected $k$-uniform hypergraph $\mathcal{G}$ with $n$ vertices, \cite{liwei} showed that
\begin{align*}\label{3.6}
{\rm diam}(\mathcal{G})\geq\frac{4}{n^2 (k-1)\alpha(\mathcal{G})}.
\end{align*}
By Theorem \ref{thm3.4}, we obtain the following improved result.
\begin{cor}\rm\label{newcor3.4}
Let $\mathcal{G}$ be a connected $k$-uniform  hypergraph with $n$ vertices. Then
\[
{\rm diam}(\mathcal{G})\geq\frac{k}{2(k-1)(n-1)\alpha(\mathcal{G})},~{\rm rad}(\mathcal{G})\geq\frac{k}{2(k-1)(n-1)\beta(\mathcal{G})}.\]
\end{cor}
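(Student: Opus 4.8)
The plan is to derive both inequalities directly from Theorem \ref{thm3.4}, which already supplies the per-vertex estimate ${\rm ecc}(j)\geq \frac{k}{2(k-1)(n-1)\alpha_j(\mathcal{G})}$ for every $j\in V(\mathcal{G})$. All that remains is to combine this estimate with the definitions of ${\rm diam}(\mathcal{G})$, ${\rm rad}(\mathcal{G})$, $\alpha(\mathcal{G})=\min_{j}\alpha_j(\mathcal{G})$ and $\beta(\mathcal{G})=\max_{j}\alpha_j(\mathcal{G})$, paying attention to the direction of the inequalities when passing to reciprocals. Since $\mathcal{G}$ is connected, Theorem \ref{connected.max} (or Lemma \ref{connected}) guarantees $\alpha_j(\mathcal{G})>0$ for all $j$, so all the reciprocals below are well defined.

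For the diameter bound I would choose a vertex $j_0$ attaining the minimum inverse Perron value, i.e. $\alpha_{j_0}(\mathcal{G})=\alpha(\mathcal{G})$. Because ${\rm diam}(\mathcal{G})=\max_{v\in V(\mathcal{G})}{\rm ecc}(v)\geq {\rm ecc}(j_0)$, applying Theorem \ref{thm3.4} to $j_0$ gives
\[
{\rm diam}(\mathcal{G})\geq {\rm ecc}(j_0)\geq \frac{k}{2(k-1)(n-1)\alpha_{j_0}(\mathcal{G})}=\frac{k}{2(k-1)(n-1)\alpha(\mathcal{G})}.
\]
For the radius bound I would instead use that $\alpha_j(\mathcal{G})\leq \beta(\mathcal{G})$ for every $j$, so $\frac{1}{\alpha_j(\mathcal{G})}\geq \frac{1}{\beta(\mathcal{G})}$; hence Theorem \ref{thm3.4} yields ${\rm ecc}(j)\geq \frac{k}{2(k-1)(n-1)\beta(\mathcal{G})}$ for every $j\in V(\mathcal{G})$, and taking the minimum over $j$ gives ${\rm rad}(\mathcal{G})=\min_{v}{\rm ecc}(v)\geq \frac{k}{2(k-1)(n-1)\beta(\mathcal{G})}$.

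There is no genuine obstacle here; the corollary is a short bookkeeping argument on top of Theorem \ref{thm3.4}. The only point deserving a moment's care is that the lower bound furnished by the theorem is decreasing in $\alpha_j(\mathcal{G})$, so the sharpest global lower bound on the diameter arises from the smallest inverse Perron value (namely $\alpha(\mathcal{G})$), whereas a bound valid uniformly for every eccentricity — and therefore for the radius — must be taken with the largest inverse Perron value (namely $\beta(\mathcal{G})$). Comparing with the bound ${\rm diam}(\mathcal{G})\geq \frac{4}{n^2(k-1)\alpha(\mathcal{G})}$ of \cite{liwei}, one checks that $\frac{k}{2(k-1)(n-1)\alpha(\mathcal{G})}\geq \frac{4}{n^2(k-1)\alpha(\mathcal{G})}$ for $k\geq 2$ and $n\geq 2$, which justifies calling the present result an improvement.
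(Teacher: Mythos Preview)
Your argument is correct and matches the paper's approach: the corollary is stated immediately after Theorem~\ref{thm3.4} with only the remark ``By Theorem~\ref{thm3.4}, we obtain the following improved result,'' and your derivation is precisely the intended bookkeeping of combining the per-vertex bound with the definitions of $\alpha(\mathcal{G})$, $\beta(\mathcal{G})$, ${\rm diam}(\mathcal{G})$ and ${\rm rad}(\mathcal{G})$.
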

In \cite{H+}, it is shown that $\alpha(\mathcal{G})\leq\delta$, where $\delta$ is the minimum degree of $\mathcal{G}$. We improve it as follows.
\begin{thm}\rm\label{newthm3.3}
Let $\mathcal{G}$ be a $k$-uniform  hypergraph with $n$ vertices. Then
\[\alpha_j(\mathcal{G})\leq\frac{(k-1)d_j}{n-1},~j\in V(\mathcal{G}).\]
\end{thm}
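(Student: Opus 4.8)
The plan is to exhibit a single explicit feasible vector in the minimization defining $\alpha_j(\mathcal{G})$ and evaluate $\mathcal{L}_\mathcal{G}\mathbf{x}^k$ on it. Since $\alpha_j(\mathcal{G})$ is a minimum over all nonnegative $\mathbf{x}$ with $x_j=0$ and $\sum_i x_i^k=1$, any such test vector gives an upper bound, so the whole task reduces to a good choice and a short computation.

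The natural candidate is the vector that is uniform on $V(\mathcal{G})\setminus\{j\}$: set $x_i=(n-1)^{-1/k}$ for $i\neq j$ and $x_j=0$. This is nonnegative, has $x_j=0$, and satisfies $\sum_{i=1}^n x_i^k=(n-1)\cdot\frac{1}{n-1}=1$, so it is feasible. Next I would use the standard expansion
\[
\mathcal{L}_\mathcal{G}\mathbf{x}^k=\sum_{\{i_1,\ldots,i_k\}\in E(\mathcal{G})}\left(x_{i_1}^k+\cdots+x_{i_k}^k-kx_{i_1}\cdots x_{i_k}\right)
\]
and split the edge sum according to whether an edge contains $j$ or not. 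For an edge $e$ with $j\notin e$, all $k$ vertices of $e$ carry the value $(n-1)^{-1/k}$, so $x_{i_1}^k+\cdots+x_{i_k}^k=\frac{k}{n-1}$ and $kx_{i_1}\cdots x_{i_k}=k(n-1)^{-1}$, hence the contribution of $e$ is $0$. For an edge $e$ with $j\in e$, one entry is $0$ and the other $k-1$ equal $(n-1)^{-1/k}$, so the sum of $k$-th powers is $\frac{k-1}{n-1}$ while the product term vanishes; thus $e$ contributes exactly $\frac{k-1}{n-1}$. There are $d_j=|E_j(\mathcal{G})|$ such edges, so $\mathcal{L}_\mathcal{G}\mathbf{x}^k=\frac{(k-1)d_j}{n-1}$, and therefore $\alpha_j(\mathcal{G})\leq\frac{(k-1)d_j}{n-1}$.

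There is no serious obstacle here; the only thing to be careful about is the bookkeeping in the edge split — in particular, recognizing that the edges avoiding $j$ contribute zero (this is exactly the equality case of AM--GM when all entries on the edge coincide), so only the $d_j$ edges through $j$ carry mass. Taking the minimum over $j$ immediately recovers the known bound $\alpha(\mathcal{G})\le\delta\cdot\frac{k-1}{n-1}\le\delta$ from \cite{H+} when combined with $d_j\le n-1$... more precisely it refines it, which is the point of the theorem.
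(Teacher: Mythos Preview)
Your proof is correct and is essentially identical to the paper's own argument: both choose the test vector $x_i=(n-1)^{-1/k}$ for $i\neq j$, $x_j=0$, observe that edges avoiding $j$ contribute zero while each of the $d_j$ edges through $j$ contributes $\frac{k-1}{n-1}$, and conclude. The only quibble is in your closing remark: the step $\delta\cdot\frac{k-1}{n-1}\le\delta$ follows from $k\le n$, not from ``$d_j\le n-1$'' (which need not hold in a hypergraph), but this is commentary outside the proof proper.
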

\begin{proof}
For $j\in V(\mathcal{G})$, let $\textbf{x}=(x_1,\ldots,x_{n})^\mathrm{T}$ be the vector satisfying
\[{x_i} = \left\{ \begin{gathered}
  {(n - 1)^{ - \frac{1}
{k}}},~~~~~~i \ne j, \hfill \\
  0,~~~~~~~~~~~~~~~~~i = j. \hfill \\
\end{gathered}  \right.
\]
Then $\sum\limits_{i=1}^n x_i^k=1$, 
and we can get
\begin{align*}
\alpha_j(\mathcal{G})&\leq \mathcal{L}_{\mathcal{G}}\textbf{x}^k
=\sum\limits_{\{i_1,\ldots,i_k\}\in E(\mathcal{G})}\left(x_{i_1}^k+\cdots +x_{i_k}^k-kx_{i_1}\cdots x_{i_k}\right)\\
&=\sum\limits_{\{i_1,\ldots,i_k\}\in E_j(\mathcal{G})}\left(x_{i_1}^k+\cdots + x_{i_k}^k\right)
=\frac{(k-1)d_j}{n-1}.
\end{align*}
\end{proof}

By Theorem \ref{newthm3.3}, we obtain the following result.
\begin{cor}\rm
Let $\mathcal{G}$ be a $k$-uniform  hypergraph with $n$ vertices, $m$ edges. Then
\[
\sum \limits_ {j=1}^n\alpha_j(\mathcal{G})\leq\frac{(k-1)km}{n-1},~j\in V(\mathcal{G}).
\]
\end{cor}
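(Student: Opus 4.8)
The plan is to simply sum the vertexwise bound of Theorem \ref{newthm3.3} over all vertices of $\mathcal{G}$. By Theorem \ref{newthm3.3}, for each $j\in V(\mathcal{G})$ we have
\[
\alpha_j(\mathcal{G})\leq\frac{(k-1)d_j}{n-1}.
\]
Adding these $n$ inequalities yields
\[
\sum_{j=1}^n\alpha_j(\mathcal{G})\leq\frac{k-1}{n-1}\sum_{j=1}^n d_j .
\]
The remaining ingredient is the handshake identity for $k$-uniform hypergraphs: since each edge contains exactly $k$ vertices, summing the degrees counts every edge exactly $k$ times, so $\sum_{j=1}^n d_j=\sum_{j=1}^n |E_j(\mathcal{G})|=km$. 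Substituting this gives
\[
\sum_{j=1}^n\alpha_j(\mathcal{G})\leq\frac{(k-1)km}{n-1},
\]
which is the claimed bound.

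There is essentially no obstacle here: the only nonroutine step is Theorem \ref{newthm3.3} itself, which is already established, and the degree-sum identity is standard. One small point worth checking is that $n>1$ (so the denominator is nonzero), which is implicit since the inverse Perron values $\alpha_j(\mathcal{G})$ are only defined when there is at least one vertex $j$ to delete and a nonzero feasible vector remains; for $n=1$ the statement is vacuous. I would present the two displayed lines above as the entire proof.
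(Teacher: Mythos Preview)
Your proof is correct and matches the paper's approach: the paper simply states that the corollary follows from Theorem~\ref{newthm3.3}, and your summation together with the handshake identity $\sum_j d_j=km$ is precisely the intended one-line argument.
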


A $2$-$(n,b,k,r,\lambda)$ design can be regarded as a $k$-uniform $r$-regular hypergraph $\mathcal{G}$ on $n$ vertices, $b$ edges, and $c(x,y)=|\{e\in E(\mathcal{G}): x,y\in e\}|=\lambda$ for any pair of distinct $x,y\in V(\mathcal{G})$. A $2$-design satisfying $n=b$ is called a symmetric design.
\begin{thm}\rm\label{2design}
Let $\mathcal{G}$ be a connected $k$-uniform hypergraph with $n$ vertices. Then $\mathcal{G}$ is a $2$-design if and only if $\alpha_1(\mathcal{G})=\cdots=\alpha_n(\mathcal{G})=\frac{\Delta(k-1)}{n-1}$, where $\Delta$ is the maximum degree of $\mathcal{G}$.
\end{thm}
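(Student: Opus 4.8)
The plan is to prove the equivalence in two directions, using throughout the expansion $\mathcal L_{\mathcal G}\mathbf x^k=\sum_{\{i_1,\ldots,i_k\}\in E(\mathcal G)}\bigl(x_{i_1}^k+\cdots+x_{i_k}^k-kx_{i_1}\cdots x_{i_k}\bigr)$ together with the variational definition of $\alpha_j(\mathcal G)$, and the standard flag‑counting identity $\lambda(n-1)=r(k-1)$ valid for a $2$‑$(n,b,k,r,\lambda)$ design. A preliminary observation I would record first: since $x_j=0$ kills every term involving the index $j$, one has $\mathcal L_{\mathcal G}\mathbf x^k=\mathcal L_{\mathcal G}(j)\mathbf z^k$ with $\mathbf z=(x_i)_{i\ne j}$, so $\alpha_j(\mathcal G)=\min\{\mathcal L_{\mathcal G}(j)\mathbf z^k:\mathbf z\in\mathbb R^{n-1}_+,\ \sum_{i\ne j}z_i^k=1\}$, and by degree‑$k$ homogeneity $\mathcal L_{\mathcal G}(j)\mathbf z^k\ge\alpha_j(\mathcal G)\sum_{i\ne j}z_i^k$ for all $\mathbf z\in\mathbb R^{n-1}_+$.

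For the forward implication, assume $\mathcal G$ is a $2$‑design; then $d_i=r=\Delta$ for all $i$ and each pair of distinct vertices lies in exactly $\lambda=\frac{\Delta(k-1)}{n-1}$ edges. The bound $\alpha_j(\mathcal G)\le\frac{(k-1)d_j}{n-1}=\lambda$ is Theorem \ref{newthm3.3}. For the reverse inequality I would take any feasible $\mathbf x$: regularity gives $\sum_{\{i_1,\ldots,i_k\}\in E(\mathcal G)}(x_{i_1}^k+\cdots+x_{i_k}^k)=\sum_i d_ix_i^k=r$, while (using $x_j=0$, so only edges avoiding $j$ contribute the product terms, and applying $x_{i_1}\cdots x_{i_k}\le\frac1k(x_{i_1}^k+\cdots+x_{i_k}^k)$ edgewise) $k\sum_{\{i_1,\ldots,i_k\}\in E(\mathcal G)}x_{i_1}\cdots x_{i_k}\le\sum_{i\ne j}x_i^k\,\bigl|\{e\in E(\mathcal G):i\in e,\ j\notin e\}\bigr|=(r-\lambda)\sum_{i\ne j}x_i^k=r-\lambda$. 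Hence $\mathcal L_{\mathcal G}\mathbf x^k\ge r-(r-\lambda)=\lambda$, so $\alpha_j(\mathcal G)\ge\lambda$, and $\alpha_j(\mathcal G)=\lambda=\frac{\Delta(k-1)}{n-1}$ for every $j$.

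For the converse, suppose $\alpha_1(\mathcal G)=\cdots=\alpha_n(\mathcal G)=\frac{\Delta(k-1)}{n-1}$. Comparing with Theorem \ref{newthm3.3} forces $d_j\ge\Delta$, hence $d_j=\Delta=:r$ for all $j$, so $\mathcal G$ is $r$‑regular. Fix $j$ and let $\mathbf 1\in\mathbb R^{n-1}_+$ be the all‑ones vector. A direct count gives $(\mathcal L_{\mathcal G}(j)\mathbf 1^{k-1})_i=c(i,j)$ for $i\ne j$ (the diagonal part contributes $d_i$, the adjacency part contributes the number $d_i-c(i,j)$ of edges through $i$ avoiding $j$), whence $\mathcal L_{\mathcal G}(j)\mathbf 1^k=\sum_{i\ne j}c(i,j)=(k-1)d_j=(k-1)\Delta=\alpha_j(\mathcal G)(n-1)$. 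Thus the inequality $\mathcal L_{\mathcal G}(j)\mathbf z^k\ge\alpha_j(\mathcal G)\sum_{i\ne j}z_i^k$ holds with equality at $\mathbf z=\mathbf 1$, so $\mathbf 1$ minimises the polynomial $\phi(\mathbf z)=\mathcal L_{\mathcal G}(j)\mathbf z^k-\alpha_j(\mathcal G)\sum_{i\ne j}z_i^k\ge0$ over $\mathbb R^{n-1}_+$; since $\mathbf 1$ is interior to the nonnegative cone, $\nabla\phi(\mathbf 1)=0$, i.e.\ $\mathcal L_{\mathcal G}(j)\mathbf 1^{k-1}=\alpha_j(\mathcal G)\mathbf 1$. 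Reading off coordinates, $c(i,j)=\alpha_j(\mathcal G)=\frac{\Delta(k-1)}{n-1}$ for every $i\ne j$. As $j$ was arbitrary, every pair of distinct vertices lies in exactly $\frac{\Delta(k-1)}{n-1}$ edges, which with $r$‑regularity and $k$‑uniformity exhibits $\mathcal G$ as a $2$‑design.

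The counting and AM‑GM estimates and the subtensor identity for $\mathcal L_{\mathcal G}(j)\mathbf 1^{k-1}$ are routine. The step I expect to need the most care is in the converse: legitimately passing from ``$\mathbf 1$ attains the minimum'' to the eigen‑equation $\mathcal L_{\mathcal G}(j)\mathbf 1^{k-1}=\alpha_j(\mathcal G)\mathbf 1$. This rests on $\mathbf 1$ being an \emph{interior} point of the nonnegative cone, so that the stationarity condition $\nabla\phi(\mathbf 1)=0$ is unobstructed (equivalently, an interior minimiser of the Rayleigh quotient of $\mathcal L_{\mathcal G}(j)$ is an H‑eigenvector for its smallest H‑eigenvalue, cf.\ Lemma \ref{2.1}). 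An alternative route avoiding calculus is to write $\mathcal L_{\mathcal G}(j)=r\mathcal I-\mathcal A_{\mathcal G}(j)$ with $\mathcal A_{\mathcal G}(j)$ a nonnegative tensor and invoke the Perron--Frobenius theorem for nonnegative tensors, $\mathbf 1$ being a positive eigenvector of $\mathcal A_{\mathcal G}(j)$ with eigenvalue $r-\frac{\Delta(k-1)}{n-1}$; this merely re‑routes the same subtlety through the fact that a positive eigenvector of a nonnegative tensor belongs to its spectral radius.
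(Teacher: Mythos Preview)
Your proof is correct and follows essentially the same route as the paper's. In the forward direction the paper obtains the lower bound $\alpha_j(\mathcal G)\ge\lambda$ slightly more directly, by discarding all edges \emph{not} through $j$ (each edge term being nonnegative by AM--GM) and then counting, rather than bounding the product terms over edges avoiding $j$; in the converse the paper invokes Lemma~\ref{2.1} and simply asserts that the normalised all-ones vector, being a minimiser of $\mathcal L_{\mathcal G}(j)\mathbf z^k$ on the unit $k$-sphere, is an H-eigenvector for $\alpha_j(\mathcal G)$, whereas you make explicit the interior-point/stationarity justification that underlies this step.
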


\begin{proof}
We first prove the necessity. If $\mathcal{G}$ is a $2$-$(n,b,k,r,\lambda)$ design, then $\lambda(n-1)=r(k-1)$ and $\Delta=r=d_1=\cdots=d_n$. For any $j\in V(\mathcal{G})$, by Theorem \ref{newthm3.3}, we have
\begin{align*}\label{newshizi3.2}
\alpha_j(\mathcal{G})\leq\frac{r(k-1)}{n-1}=\lambda.\tag{3.9}
\end{align*}
Let $\mathbf{x}=(x_1,\ldots,x_n)^\mathrm{T}\in\mathbb{R}_+^{n}$ satisfying $x_j=0$, $\sum_{i=1}^nx_i^k=1$ and $\alpha_j(\mathcal{G})={\mathcal{L_G}}\mathbf{x}^k$. Then we get
\begin{align*}\label{newshizi3.3}
\alpha_j(\mathcal{G})= \mathcal{L}_{\mathcal{G}}\textbf{x}^k\geq\sum\limits_{\{i_1,\ldots,i_k\}\in E_j(\mathcal{G})}\left(x_{i_1}^k+\cdots + x_{i_k}^k-kx_{i_1}\cdots x_{i_k}\right)
=\lambda\sum\limits_{i\neq j}x_i^k=\lambda.\tag{3.10}
\end{align*}
Combining (\ref{newshizi3.2}) and (\ref{newshizi3.3}), we can get
\[
\alpha_1(\mathcal{G})=\cdots=\alpha_n(\mathcal{G})=\lambda=\frac{r(k-1)}{n-1}=\frac{\Delta(k-1)}{n-1}.
\]

Next we prove the sufficiency. If $\alpha_1(\mathcal{G})=\cdots=\alpha_n(\mathcal{G})=\frac{\Delta(k-1)}{n-1}$. From Theorem \ref{newthm3.3}, we obtain $d_1=\cdots=d_n=\Delta$. Let $\mathbf{z}=\left({(n-1)^{-\frac{1}{k}}},\ldots,{(n-1)^{-\frac{1}{k}}} \right)^\mathrm{T}\in\mathbb{R}_+^{n-1}$, $\mathbf{y}=\left(\mathbf{z}^\mathrm{T}, 0\right)^\mathrm{T}\in\mathbb{R}_+^n$. Then
\begin{align*}
\mathcal{L}_{\mathcal{G}}\textbf{y}^k&=\sum\limits_{\{i_1,\ldots,i_k\}\in E(\mathcal{G})}\left(y_{i_1}^k+\cdots +y_{i_k}^k-ky_{i_1}\cdots y_{i_k}\right)=\sum\limits_{\{i_1,\ldots,i_k\}\in E_n(\mathcal{G})}\left(y_{i_1}^k+\cdots+ y_{i_k}^k\right)\\
&=\frac{\Delta(k-1)}{n-1}=\alpha_n(\mathcal{G})=\alpha(\mathcal{G}).
\end{align*}
By Lemma \ref{2.1}, we know that $\alpha(\mathcal{G})=\alpha_n(\mathcal{G})$ is the smallest H-eigenvalue of $\mathcal{L_G}(n)$. Since $\mathcal{L}_{\mathcal{G}}(n)\textbf{z}^k=\mathcal{L}_{\mathcal{G}}\textbf{y}^k=\alpha(\mathcal{G})$, $\textbf{z}$ is an H-eigenvector corresponding to $\alpha(\mathcal{G})$, that is
\[
\alpha(\mathcal{G})\mathbf{z}^{[k-1]}=\mathcal{L_G}(n)\mathbf{z}^{k-1}.
\]
For all $i\in V(\mathcal{G})$, we have
\begin{align*}
\alpha(\mathcal{G})&=\frac{1}{{z_i}^{k-1}}\left(\mathcal{L_G}(n)\mathbf{z}^{k-1}\right)_i=\frac{1}{{z_i}^{k-1}}\sum\limits_{i_2,\ldots,i_k=1}^{n-1}(\mathcal{L_G} (n))_{ii_2\cdots i_k}z_{i_2}\cdots z_{i_k}\\
&=\sum\limits_{i_2,\ldots,i_k=1}^{n-1}(\mathcal{L_G})_{ii_2\cdots i_k}
=c(i,n).
\end{align*}
Then we get
\[
c(1,n)=c(2,n)=\cdots=c(n-1,n)=\alpha(\mathcal{G}).
\]
Similarly, we can obtain
\[
c(i,j)=\alpha(\mathcal{G}), ~i,j \in V(\mathcal{G})~ and ~i\neq j,
\]
which implies that $\mathcal{G}$ is a $2$-design.
\end{proof}
we give an estimation of the edge connectivity of a $2$-design as follows.
\begin{thm}\rm
Let $\mathcal{G}$ be a $2$-$(n,b,k,r,\lambda)$ design. Then
\[
\frac{n\lambda}{k}\leq e(\mathcal{G})\leq\frac{(n-1)\lambda}{k-1}.
\]
Moreover, if $\mathcal{G}$ is a symmetric design, then $e(\mathcal{G})=k=r$.
\end{thm}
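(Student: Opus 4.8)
The plan is to prove the lower bound, the upper bound, and the symmetric refinement separately, each by a short direct argument combined with Lemma~\ref{edge} and Theorem~\ref{2design}.

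First I would record the two standard counting identities for a $2$-$(n,b,k,r,\lambda)$ design: counting incident (vertex, edge) pairs gives $nr=bk$, and counting, for a fixed vertex, the edges through it against the edges through the pairs containing it, gives $\lambda(n-1)=r(k-1)$. Since $\lambda\ge 1$, every two vertices lie in a common edge, so $\mathcal{G}$ is connected and Theorem~\ref{2design} applies, giving $\alpha(\mathcal{G})=\frac{\Delta(k-1)}{n-1}=\frac{r(k-1)}{n-1}=\lambda$ (using $\Delta=r$). Substituting into Lemma~\ref{edge} yields $e(\mathcal{G})\ge\frac{n}{k}\alpha(\mathcal{G})=\frac{n\lambda}{k}$. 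For the upper bound, fix any vertex $v$ and take $S=\{v\}$: because $k\ge 2$, an edge contains $v$ if and only if it meets both $S$ and $\overline{S}$, so $|E(S,\overline{S})|=d_v=r$, whence $e(\mathcal{G})\le r=\frac{(n-1)\lambda}{k-1}$.

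For the symmetric case $n=b$, the identity $nr=bk$ forces $r=k$, so the bound above already gives $e(\mathcal{G})\le k$. The reverse inequality $e(\mathcal{G})\ge k$ I would prove by contradiction: if some proper nonempty $S$ satisfied $|E(S,\overline{S})|\le k-1$, then for any $v\in S$ at most $k-1$ of the $r=k$ edges through $v$ are cut edges, so at least one edge through $v$ is contained in $S$; symmetrically some edge is contained in $\overline{S}$, and these two edges are disjoint. This contradicts the classical fact that any two distinct blocks of a symmetric $2$-design meet in exactly $\lambda\ge 1$ points — a fact that can be quoted, or re-derived in a line from $NN^{\top}=(r-\lambda)I+\lambda J$ with the incidence matrix $N$ square (so that $N^{\top}N$ has the same form), or from a Cauchy--Schwarz estimate comparing $\sum_{f\ne e_0}|f\cap e_0|=k(k-1)$ with $\sum_{f\ne e_0}|f\cap e_0|^{2}=k(k-1)\lambda$. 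Hence $e(\mathcal{G})=k=r$.

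The argument is almost entirely elementary double counting together with the two cited results; the one genuinely external ingredient is the ``two blocks meet in $\lambda$ points'' property of symmetric designs, which is where I expect the only real content to lie (if one insists on a self-contained treatment, the Cauchy--Schwarz derivation above is the step to flesh out). As a consistency check I would also verify that the two displayed bounds are compatible, i.e.\ $\frac{n\lambda}{k}\le\frac{(n-1)\lambda}{k-1}$, which holds precisely because $k\le n$.
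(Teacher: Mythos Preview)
Your treatment of the general bounds matches the paper's: invoke Theorem~\ref{2design} to get $\alpha(\mathcal{G})=\lambda$, apply Lemma~\ref{edge} for the lower bound, and use $e(\mathcal{G})\le r$ (edge connectivity at most minimum degree) for the upper bound.

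For the symmetric case, however, the paper takes a shorter and fully self-contained route. After noting $r=k$, it substitutes $\lambda=\frac{r(k-1)}{n-1}=\frac{k(k-1)}{n-1}$ back into the already-proved lower bound to obtain
\[
\frac{n(k-1)}{n-1}\le e(\mathcal{G})\le k,
\]
and then simply observes that $e(\mathcal{G})$ is an integer: since $\frac{n(k-1)}{n-1}-(k-1)=\frac{k-1}{n-1}>0$, the only integer in the interval is $k$. Your argument via the block-intersection property of symmetric designs (any two blocks meet in exactly $\lambda$ points) is correct and gives a more structural explanation of why no small cut can exist, but it imports an external classical fact, whereas the paper's integrality trick needs nothing beyond Lemma~\ref{edge} and Theorem~\ref{2design}.
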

\begin{proof}
Since $\mathcal{G}$ is a $2$-$(n,b,k,r,\lambda)$ design, we have $\lambda(n-1)=r(k-1)$. By Theorem \ref{2design}, we have \[\alpha(\mathcal{G})=\frac{r(k-1)}{n-1}=\lambda.\]
It follows from Lemma \ref{edge} that
\begin{align*}\label{newshizi3.4}
\frac{n\lambda}{k}=\frac{n}{k}\alpha(\mathcal{G})\leq e(\mathcal{G})\leq r=\frac{(n-1)\lambda}{k-1}.\tag{3.11}
\end{align*}
Moreover, if $\mathcal{G}$ is a symmetric design, then  $n=b$. Since $nr=bk$, we have $r=k$.  From $\lambda(n-1)=r(k-1)$ and (\ref{newshizi3.4}), we have
\[
\frac{n(k-1)}{n-1}\leq e(\mathcal{G})\leq k.
\]
Since $e(\mathcal{G})$ is a positive integer, we can get $e(\mathcal{G})=k=r$.
\end{proof}

\section{Inverse Perron values and resistance distance of graphs}
For a vertex $i$ of a connected graph $G$, we define its resistance eccentricity as $r_i(G)=\max_{j\in V(G)}r_{ij}$.
\begin{thm}\rm
Let $G$ be a connected graph. For any $i\in V(G)$, we have
\begin{eqnarray*}
r_i(G)\leqslant\frac{1}{\alpha_i(G)}.
\end{eqnarray*}
\end{thm}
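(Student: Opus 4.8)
The plan is to rewrite the resistance distances $r_{ij}(G)$ using the explicit symmetric $\{1\}$-inverse of $\mathcal{L}_G$ provided by Lemma~\ref{2.6}, and then to bound the diagonal entries that appear by the spectral radius of $\mathcal{L}_G(i)^{-1}$, which the introduction identifies with $1/\alpha_i(G)$.

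First I would fix $i\in V(G)$ and, after relabeling the vertices if necessary, assume that $i$ is the vertex corresponding to the last row of $\mathcal{L}_G$; this affects neither $r_i(G)$ nor $\alpha_i(G)$. By Lemma~\ref{2.6}, the matrix $M=\begin{pmatrix}\mathcal{L}_G(i)^{-1}&0\\0&0\end{pmatrix}$ is a symmetric $\{1\}$-inverse of $\mathcal{L}_G$ whose $i$-th row and column vanish. Taking $\mathcal{L}_G^{(1)}=M$ in Lemma~\ref{2.4} and using $M_{ii}=M_{ij}=M_{ji}=0$, I get, for every $j\in V(G)\setminus\{i\}$,
\[
r_{ij}(G)=M_{ii}+M_{jj}-M_{ij}-M_{ji}=M_{jj}=\bigl(\mathcal{L}_G(i)^{-1}\bigr)_{jj}.
\]
Since $r_{ii}(G)=0$, this yields $r_i(G)=\max_{j\in V(G)\setminus\{i\}}\bigl(\mathcal{L}_G(i)^{-1}\bigr)_{jj}$.

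It remains to bound the diagonal entries of $B:=\mathcal{L}_G(i)^{-1}$. The matrix $\mathcal{L}_G(i)$ is symmetric, so $B$ is symmetric, and each diagonal entry $B_{jj}$ is the Rayleigh quotient of $B$ at a unit coordinate vector, hence $B_{jj}\le\lambda_{\max}(B)$. Because $B$ is nonnegative (as recalled in the introduction), Perron--Frobenius gives $\lambda_{\max}(B)=\rho(B)$; and the introduction identifies $\rho(B)=\rho\bigl(\mathcal{L}_G(i)^{-1}\bigr)=\alpha_i^{-1}(G)$, which is finite since $\alpha_i(G)>0$ by Theorem~\ref{connected.max}. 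Combining,
\[
r_i(G)=\max_{j\in V(G)\setminus\{i\}}\bigl(\mathcal{L}_G(i)^{-1}\bigr)_{jj}\le\rho\bigl(\mathcal{L}_G(i)^{-1}\bigr)=\frac{1}{\alpha_i(G)}.
\]
I do not foresee any genuine difficulty here; the only points needing a word of care are that the relabeling is harmless and the elementary fact that a diagonal entry of a symmetric matrix never exceeds its largest eigenvalue. A shorter route is to invoke directly that $\alpha_i^{-1}(G)$ is the Perron value of $\mathcal{L}_G(i)^{-1}$ and that $r_{ij}(G)$ equals the $j$-th diagonal entry of that inverse, but passing through Lemmas~\ref{2.4} and~\ref{2.6} keeps the argument self-contained within the paper.
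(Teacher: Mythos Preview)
Your proof is correct and follows essentially the same approach as the paper: relabel so $i$ is the last vertex, use Lemmas~\ref{2.6} and~\ref{2.4} to identify $r_{ij}(G)=(\mathcal{L}_G(i)^{-1})_{jj}$, and then bound the diagonal entries of the symmetric matrix $\mathcal{L}_G(i)^{-1}$ by its spectral radius $\alpha_i^{-1}(G)$. The only difference is the order of presentation and your slightly more detailed justification of the diagonal bound via the Rayleigh quotient.
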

\begin{proof}
Without loss of generality, assume that $i$ is the vertex corresponding to the last row of the Laplacian matrix $\mathcal{L}_G$. Since $\alpha_i(G)$ is the minimum eigenvalue of the principal submatrix $\mathcal{L}_G(i)$, $\alpha_i^{-1}(G)$ is the spectral radius of the symmetric nonnegative matrix $\mathcal{L}_G(i)^{-1}$. So $\alpha_i^{-1}(G)\geqslant\max_{j\neq i}(\mathcal{L}_G(i)^{-1})_{jj}$.

By Lemma \ref{2.6}, $N=\begin{pmatrix}\mathcal{L}_G(i)^{-1}&0\\0&0\end{pmatrix}\in\mathbb{R}^{n\times n}$ is a symmetric $\{1\}$-inverse of $\mathcal{L}_G$. From Lemma \ref{2.4}, we get $r_{ij}(G)=(\mathcal{L}_G(i)^{-1})_{jj}$ for any $j\neq i$. Hence
\begin{eqnarray*}
\alpha_i^{-1}(G)&\geqslant&\max_{j\neq i}(\mathcal{L}_G(i)^{-1})_{jj}=r_i(G),\\
r_i(G)&\leqslant&\frac{1}{\alpha_i(G)}.
\end{eqnarray*}
\end{proof}
For a vertex $i$ of a connected graph $G$, its resistance centrality is defined as $Kf_i(G)=\sum_{j\in V(G)}r_{ij}(G)$. It is a centrality index of networks \cite{Bozzo}.
\begin{thm}\rm
Let $G$ be a connected graph with $n$ vertices. For any $i\in V(G)$, we have
\begin{eqnarray*}
nKf_i(G)-Kf(G)\leqslant\frac{n-1}{\alpha_i(G)}.
\end{eqnarray*}
\end{thm}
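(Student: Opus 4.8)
The plan is to express the quantity $nKf_i(G)-Kf(G)$ in terms of the symmetric $\{1\}$-inverse $N=\begin{pmatrix}\mathcal{L}_G(i)^{-1}&0\\0&0\end{pmatrix}$ supplied by Lemma~\ref{2.6}, and then bound it using the spectral radius of $\mathcal{L}_G(i)^{-1}$, exactly as in the previous theorem. Without loss of generality assume $i$ is the vertex corresponding to the last row of $\mathcal{L}_G$, so that $N$ has the displayed block form.

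First I would record the two trace-type identities. By Lemma~\ref{2.4} together with Lemma~\ref{2.6} (as used in the proof of the preceding theorem), $r_{ij}(G)=(\mathcal{L}_G(i)^{-1})_{jj}$ for every $j\neq i$ and $r_{ii}(G)=0$; summing over $j$ gives $Kf_i(G)=\sum_{j\neq i}(\mathcal{L}_G(i)^{-1})_{jj}=\mbox{\rm tr}(\mathcal{L}_G(i)^{-1})=\mbox{\rm tr}(N)$. On the other hand, because $N$ is a symmetric $\{1\}$-inverse of $\mathcal{L}_G$, Lemmas~\ref{2.4} and~\ref{2.5} apply with $\mathcal{L}_G^{(1)}=N$, so $Kf(G)=n\,\mbox{\rm tr}(N)-e^\top N e$. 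Subtracting,
\[
nKf_i(G)-Kf(G)=n\,\mbox{\rm tr}(N)-\bigl(n\,\mbox{\rm tr}(N)-e^\top N e\bigr)=e^\top N e=e^\top \mathcal{L}_G(i)^{-1} e,
\]
where in the last step the zero block kills the $i$-th coordinate of $e$, leaving the all-ones vector $\mathbf{1}\in\mathbb{R}^{n-1}$ so that $e^\top N e=\mathbf{1}^\top \mathcal{L}_G(i)^{-1}\mathbf{1}$.

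Next I would bound $\mathbf{1}^\top \mathcal{L}_G(i)^{-1}\mathbf{1}$. The matrix $\mathcal{L}_G(i)^{-1}$ is symmetric nonnegative (it is the inverse of a nonsingular $M$-matrix, as noted in the introduction), and its spectral radius equals $\alpha_i^{-1}(G)$ by Lemma~\ref{2.1}. For any real symmetric matrix $B$ of order $n-1$ one has $\mathbf{1}^\top B\mathbf{1}\leq \lambda_{\max}(B)\,\|\mathbf{1}\|^2=(n-1)\lambda_{\max}(B)$; applying this with $B=\mathcal{L}_G(i)^{-1}$ and $\lambda_{\max}(B)=\rho(B)=\alpha_i^{-1}(G)$ (the spectral radius coincides with the largest eigenvalue since $B$ is symmetric nonnegative) gives
\[
nKf_i(G)-Kf(G)=\mathbf{1}^\top \mathcal{L}_G(i)^{-1}\mathbf{1}\leq (n-1)\alpha_i^{-1}(G)=\frac{n-1}{\alpha_i(G)},
\]
which is the claimed inequality.

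The one point needing a little care — the ``main obstacle,'' though it is minor — is the bookkeeping showing that $e^\top N e$ really reduces to $\mathbf{1}^\top \mathcal{L}_G(i)^{-1}\mathbf{1}$ rather than picking up a spurious contribution from the deleted vertex; this is immediate once one writes $e=(\mathbf{1}^\top,1)^\top$ and multiplies through the block form of $N$. Everything else is an assembly of Lemmas~\ref{2.1}, \ref{2.4}, \ref{2.5}, \ref{2.6} plus the elementary Rayleigh-type bound $\mathbf{1}^\top B\mathbf{1}\leq(n-1)\rho(B)$ for symmetric nonnegative $B$. I would also remark that equality analysis is not required by the statement, so I would not pursue it.
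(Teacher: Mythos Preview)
Your proof is correct and follows essentially the same route as the paper: both use Lemmas~\ref{2.4}, \ref{2.5}, \ref{2.6} to identify $nKf_i(G)-Kf(G)$ with $e^\top\mathcal{L}_G(i)^{-1}e$ (equivalently $\mathbf{1}^\top\mathcal{L}_G(i)^{-1}\mathbf{1}$) and then bound this by $(n-1)\alpha_i^{-1}(G)$ via the Rayleigh inequality for the symmetric matrix $\mathcal{L}_G(i)^{-1}$. The only differences are cosmetic ordering---you compute the combinatorial quantity first and bound second, the paper states the Rayleigh bound first---and your more careful distinction between the all-ones vectors in $\mathbb{R}^n$ and $\mathbb{R}^{n-1}$, which the paper leaves implicit.
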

\begin{proof}
Without loss of generality, assume that $i$ is the vertex corresponding to the last row of the Laplacian matrix $\mathcal{L}_G$. Then $\alpha_i^{-1}(G)$ is the maximum eigenvalue of the symmetric matrix $\mathcal{L}_G(i)^{-1}$. Let $e$ be the all-ones column vector, then
\begin{eqnarray*}
\alpha_i^{-1}(G)\geqslant\frac{e^\top\mathcal{L}_G(i)^{-1}e}{e^\top e}=\frac{e^\top\mathcal{L}_G(i)^{-1}e}{n-1}.
\end{eqnarray*}
By Lemma \ref{2.6}, $N=\begin{pmatrix}\mathcal{L}_G(i)^{-1}&0\\0&0\end{pmatrix}\in\mathbb{R}^{n\times n}$ is a symmetric $\{1\}$-inverse of $\mathcal{L}_G$. By Lemma \ref{2.5}, we have
\begin{eqnarray*}
Kf(G)=n\mbox{\rm tr}(N)-e^\top Ne=n\mbox{\rm tr}(\mathcal{L}_G(i)^{-1})-e^\top\mathcal{L}_G(i)^{-1}e.
\end{eqnarray*}
From Lemma \ref{2.4}, we get $r_{ij}(G)=(\mathcal{L}_G(i)^{-1})_{jj}$ for any $j\neq i$. Hence $\mbox{\rm tr}(\mathcal{L}_G(i)^{-1})=Kf_i(G)$ and
\begin{eqnarray*}
Kf(G)=nKf_i(G)-e^\top\mathcal{L}_G(i)^{-1}e.
\end{eqnarray*}
By $\alpha_i^{-1}(G)\geqslant\frac{e^\top\mathcal{L}_G(i)^{-1}e}{n-1}$ we get
\begin{eqnarray*}
\alpha_i^{-1}(G)\geqslant\frac{e^\top\mathcal{L}_G(i)^{-1}e}{n-1}&=&\frac{nKf_i(G)-Kf(G)}{n-1},\\
nKf_i(G)-Kf(G)&\leqslant&\frac{n-1}{\alpha_i(G)}.
\end{eqnarray*}
\end{proof}

\begin{cor}\rm
Let $G$ be a connected graph with $n$ vertices. Then
\begin{eqnarray*}
Kf(G)\leqslant\frac{n-1}{n}\sum_{i=1}^n\alpha_i^{-1}(G).
\end{eqnarray*}
\end{cor}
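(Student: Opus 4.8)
The plan is to obtain the corollary simply by summing the inequality of the preceding theorem over all $n$ vertices and then simplifying the resulting left-hand side through the definitions of $Kf_i(G)$ and $Kf(G)$. First, I would apply the previous theorem to each $i\in V(G)$, which yields
\[
nKf_i(G)-Kf(G)\leqslant\frac{n-1}{\alpha_i(G)}=(n-1)\alpha_i^{-1}(G).
\]
Adding these $n$ inequalities gives
\[
n\sum_{i=1}^n Kf_i(G)-nKf(G)\leqslant(n-1)\sum_{i=1}^n\alpha_i^{-1}(G).
\]

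The only nontrivial bookkeeping is to evaluate $\sum_{i=1}^n Kf_i(G)$. Since $Kf_i(G)=\sum_{j\in V(G)}r_{ij}(G)$ with the convention $r_{ii}(G)=0$, summing over all ordered pairs $(i,j)$ doubles the sum over unordered pairs, so $\sum_{i=1}^n Kf_i(G)=\sum_{i,j\in V(G)}r_{ij}(G)=2\sum_{\{i,j\}\subseteq V(G)}r_{ij}(G)=2Kf(G)$. Substituting this into the displayed inequality, the left-hand side becomes $2nKf(G)-nKf(G)=nKf(G)$, hence $nKf(G)\leqslant(n-1)\sum_{i=1}^n\alpha_i^{-1}(G)$, and dividing by $n$ gives the claimed bound.

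There is essentially no obstacle here: the corollary is a direct consequence of the previous theorem, and the entire content of the proof is the elementary identity $\sum_{i=1}^n Kf_i(G)=2Kf(G)$. The one point worth double-checking carefully is the factor of $2$ arising from the passage between ordered and unordered pairs of vertices in the definition of the Kirchhoff index.
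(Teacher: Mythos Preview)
Your proposal is correct and matches the paper's own proof essentially line for line: the paper also sums the inequality of Theorem~4.2 over all vertices and uses the identity $\sum_{i=1}^n Kf_i(G)=2Kf(G)$ (stated there implicitly as $\sum_{i=1}^n\bigl(nKf_i(G)-Kf(G)\bigr)=nKf(G)$) to conclude. Your explicit verification of the factor~$2$ is the only addition, and it is a welcome one.
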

\begin{proof}
By Theorem 4.2, we have
\begin{eqnarray*}
\sum_{i=1}^n\frac{n-1}{\alpha_i(G)}&\geqslant&\sum_{i=1}^n(nKf_i(G)-Kf(G))=nKf(G),\\
Kf(G)&\leqslant&\frac{n-1}{n}\sum_{i=1}^n\alpha_i^{-1}(G).
\end{eqnarray*}
\end{proof}

\vspace{3mm}
\noindent
\textbf{Acknowledgements}

\vspace{3mm}
This work is supported by the National Natural Science Foundation of China (No. 11371109 and No. 11601102), the Natural Science Foundation of the Heilongjiang Province (No. QC2014C001) and the Fundamental Research Funds for the Central Universities.

\vspace{3mm}
\noindent
\textbf{References}

\end{spacing}
\end{document}